\documentclass[11pt]{article}
\usepackage{booktabs}
\usepackage{caption}
\usepackage{mathrsfs}
\usepackage{amsmath}
\usepackage{amsfonts,amsthm,amssymb,mathrsfs,bbding}
\usepackage{graphics,multicol}
\usepackage{graphicx}
\usepackage{color}
\usepackage{enumerate}
\usepackage{caption}
\usepackage{rotating}
\usepackage{lscape}
\usepackage{longtable}
\allowdisplaybreaks[4]
\usepackage{tabularx}
\usepackage[colorlinks=true,anchorcolor=blue,filecolor=blue,linkcolor=blue,urlcolor=blue,citecolor=blue]{hyperref}
\usepackage{extarrows}
\usepackage{cite}
\usepackage{tikz}
\usepackage{float}
\usepackage{latexsym,bm}
\usepackage{mathtools}
\evensidemargin=\oddsidemargin\topmargin=-1.5cm

\newtheorem{thm}{Theorem}

\newtheorem{lem}[thm]{Lemma}

\newtheorem*{remark}{Remark}

\newtheorem{conj}[thm]{Conjecture}

\newtheorem{claim}{Claim}
\theoremstyle{definition}

\addtocounter{section}{0}
\begin{document}

\title{\bf On a conjecture of Kolokolnikov on algebraic connectivity}
\author{
Cheng Chi\thanks{School of Mathematical Sciences, Shanghai Jiao Tong University, 800 Dongchuan Road, Shanghai 200240, China.
Email: {\tt chengchi@sjtu.edu.cn}. Supported by National Key R\&D Program of China under grant No. 2022YFA1006400 and National Natural Science Foundation of China No. 12571376.}
\and
Junjie Wang\thanks{College of Mathematics and Statistics, Hunan Normal University, Changsha,  Hunan, 410081, China.
Email: {\tt wangjunjie2827@163.com}. Supported by Hunan Provincial Innovation Foundation for Postgraduate (No. CX20250747).}
\and
Jiaxin Zheng\thanks{School of Mathematical Sciences, East China Normal University, 500 Dongchuan Road, Shanghai 200241, China.
Email: {\tt fabulousxjz@163.com}.}
}

\date{}
\maketitle
{\flushleft\large\bf Abstract } For a graph $G$, let $\alpha(G)$ be the second smallest eigenvalue of the Laplacian matrix of $G$, also known as the algebraic connectivity. Algebraic connectivity plays an important role in characterizing the connectivity of graphs and convergence properties of networks. Kolokolnikov conjectured that among all graphs on $n$ vertices with exactly $2n-4$ edges, $\alpha(G)\leq 2$ and one of the maximizers is the complete bipartite graph whose two parts have sizes two and $n-2$, respectively. In this paper, we completely resolve this conjecture.
\begin{flushleft}
  \textbf{Keywords:} Algebraic connectivity; Laplacian eigenvalue; spectral graph theory.
\end{flushleft}

\section{Introduction}

Let $G$ be a graph with $n$ vertices. Let $A(G)$ be the adjacency matrix of $G$, and let $D(G)$ be the diagonal matrix whose diagonal entries are the vertex degrees. The Laplacian matrix of $G$ is $L(G)=D(G)-A(G)$, and let its eigenvalues be arranged in nondecreasing order as $0=\lambda_1(G)\leq \lambda_2(G)\leq \cdots\leq \lambda_n(G)$. The second smallest Laplacian eigenvalue $\lambda_2(G)$, introduced by Fiedler, is called the \emph{algebraic connectivity} of $G$ and is denoted by $\alpha(G)$. In particular, $\alpha(G)>0$ if and only if $G$ is connected \cite{F73}.

Algebraic connectivity is not only a spectral measure of graph connectivity, but also a parameter that characterizes the performance of dynamic processes on networks, such as consensus in multi-agent networks and random walks on graphs. For multi-agent networks, for instance, a larger algebraic connectivity of the graph that represents the interactions among agents leads to a faster convergence rate of a typical consensus algorithm\cite{OM04}. This connection is relevant to communication topology design for multi-robot and UAV networks. In such systems, each vehicle typically exchanges information with only a subset of the other vehicles, and
the communication graph affects the speed at which local state or estimate information propagates through the group. Algebraic
connectivity has therefore been used in graph-based models of UAV swarms and in studies of connectivity maintenance and recovery after communication disruptions \cite{WAE16,WangUAV18,HuUAV25}.
Moreover, algebraic connectivity provides a quantitative measure of network connectivity and robustness, as it is closely related to classical vertex and edge connectivities. Consequently, algebraic connectivity has attracted considerable attention in the design and optimization of communication networks and other complex network systems. In addition, larger algebraic connectivity is associated with more robust synchronization of dynamical systems and more efficient information dissemination in random-walk processes \cite{OFT17}.

In general, determining the graph that maximizes algebraic connectivity among all graphs with $n$ vertices and $m$ edges is a difficult problem. Several extremal cases are known. For example, when $m=n-1$, the star graph $K_{1,n-1}$ is the unique maximizer of algebraic connectivity among all trees on $n\geq 3$ vertices, and its algebraic connectivity is equal to $1$ \cite{GMS90}. More generally, Ogiwara, Fukami, and Takahashi characterized stars, complete bipartite graphs, and other graph families that serve as global or local maximizers among graphs with fixed order and size under certain explicit conditions \cite{OFT17}. In this paper, we focus on the algebraic connectivity of $n$-vertex graphs with at most $2n-4$ edges. All graphs considered in this paper are undirected and simple. Let $G$ be a graph with vertex set $V(G)$ and edge set $E(G)$, and let $e(G)$ denote the number of edges in $G$.

For $1\leq t\leq n-1$, let $K_{t,n-t}$ denote the complete bipartite graph whose two vertex classes have sizes $t$ and $n-t$. Kolokolnikov proposed the following conjecture.

\begin{conj}[Kolokolnikov \cite{K15}]\label{conj::kolokolnikov}
  Among all graphs on $n$ vertices with exactly $2(n-2)$ edges, the complete bipartite graph $K_{2,n-2}$ maximizes the algebraic connectivity.
\end{conj}

Kolokolnikov\cite{K15} also verified the conjecture \ref{conj::kolokolnikov} for graphs with at most $13$ vertices. The main contribution of this paper is to prove the following Theorem \ref{thm::main1}, which confirms Conjecture \ref{conj::kolokolnikov}.

\begin{thm}\label{thm::main1}
  Let $G$ be a graph on $n\geq 4$ vertices. If $e(G)\leq 2n-4$, then $\alpha(G)\leq 2$.
\end{thm}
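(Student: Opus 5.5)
We argue by contradiction. Suppose $G$ is a graph on $n\ge 4$ vertices with $e(G)\le 2n-4$ and $\alpha(G)>2$. Among such counterexamples we take one with $n$ minimal. Our tools are Fiedler's inequality $\alpha(G)\le \kappa(G)\le\delta(G)$ (valid for non-complete $G$) together with the variational characterization
\[
\alpha(G)=\min_{x\perp \mathbf 1,\ x\ne 0}\frac{\sum_{uv\in E}(x_u-x_v)^2}{\sum_u x_u^2}.
\]
The whole task is to exhibit an explicit test vector $x\perp\mathbf 1$ with Rayleigh quotient at most $2$. The small cases $n=4$ and $n=5$, where $2n-4<\binom n2$ is easy to check, can be verified directly. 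The case $n=4$ is immediate: $4$ edges force $\delta\le 2$. For $n=5$ we have $6$ edges, so some vertex has degree at most $2$. If $\delta\le 1$ we are done. Otherwise the graph is not complete, so $\alpha\le\kappa\le 2$.

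\textbf{Degree structure.} If $\delta(G)\le 2$, then since $G$ is non-complete (because $2n-4<\binom n2$ for $n\ge4$), we get $\alpha\le\delta\le2$ at once. So assume $\delta\ge3$. Summing degrees gives $\sum_v (d_v-4)\le -8$, so the graph has many vertices of degree exactly $3$: if $n_3$ denotes their number, then $n_3\ge 8$. The real content of the theorem is therefore the case where many degree-$3$ vertices coexist with some high-degree vertices, as in near-extremal graphs that look like $K_{2,n-2}$ with extra structure.

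\textbf{Test vector from two low-degree vertices.} Take two degree-$3$ vertices $u,v$ and set $x_u=1$, $x_v=-1$, and $x=0$ elsewhere. The Rayleigh quotient of this vector is $(d_u+d_v+4[uv\in E])/2$, which only gives $3$ or $5$. This is too weak, so we must spread the vector. Instead we choose two disjoint vertex sets $S,T$ and put $x=a$ on $S$, $x=-b$ on $T$, and $0$ elsewhere, with $a|S|=b|T|$. The quotient is then controlled by $e(S,\overline S)$, $e(T,\overline T)$ and $e(S,T)$; it is the standard ratio-cut bound. Choosing $S=T'$ to be the set of degree-$3$ vertices split into two halves, with the remaining vertices at $0$, and using the fact that each degree-$3$ vertex sends few edges, the quotient is at most about $3$. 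This is still not $2$. The better approach is to also put a nonzero constant $c$ on the high-degree vertices $H$ and optimize $c$. The numerator counts edges between $H$ and the low vertices weighted by $(c\mp a)^2$, and the edge budget $e(G)\le 2n-4$ bounds $e(H,L)$ on average by $2$ per low vertex. Concretely, I would take $x_w=1$ on low-degree vertices $L$ (degree $\le 3$), $x_w=-t$ on $H$, with $|L|=t|H|$ replaced by an orthogonality correction. The quotient becomes $(1+t)^2 e(L,H)/(|L|+t^2|H|)$, where edges inside $L$ and inside $H$ contribute nothing. Using $e(L,H)\le 2n-4-e(L)-e(H)$ together with degree counts, we optimize over $t$.

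\textbf{Main obstacle.} The hardest step is this final optimization. A single two-level vector handles the case where $L$ is nearly independent and $|H|$ is small, matching $K_{2,n-2}$, where equality $\alpha=2$ holds. For intermediate structures, such as degree-$3$ vertices forming many internal edges or $H$ large with moderate degrees, one needs either a three-level vector or an inductive reduction: deleting a degree-$3$ vertex $v$ adjacent to $H$ and applying minimality together with interlacing ($\alpha(G-v)\ge\alpha(G)-1$) is too lossy. So I would first try a case split on $e(L)$. If $e(L)$ is large, the within-$L$ structure gives a sparse cut inside $L$ that can be exploited with a vector supported on $L$. If $e(L)$ is small, the bipartite-like $L$–$H$ vector works. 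Verifying that the two regimes overlap under $e\le 2n-4$ is where I expect the real work.
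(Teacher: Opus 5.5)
Your small cases are correct: for non-complete $G$, $\alpha(G)\le\kappa(G)\le\delta(G)$ handles $n\in\{4,5\}$ and every graph with $\delta\le 2$. But the whole content of the theorem is the case $\delta\ge 3$. Using $\alpha\le\kappa$ and monotonicity of $\alpha$ under adding edges, this case reduces to $3$-connected graphs with exactly $2n-4$ edges. For that case your proposal gives no argument, only candidate test vectors and a ``final optimization'' you leave open.

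The candidate you settle on is also of the wrong type. Take a vector equal to $1$ on $L$ and $-t$ on $H$ with $L\cup H=V$. Orthogonality forces $t=|L|/|H|$, and the Rayleigh quotient is then exactly the ratio-cut value $n\,e(L,H)/(|L||H|)$.
\begin{itemize}
\item For $K_{2,n-2}$, with $L$ the large side, this equals $n$, not $2$, so it does not match the extremal graph.
\item In the regime that actually has to be handled, the degree-$3$ vertices form an independent set; the paper shows a counterexample must look like this. Then $e(L,H)=3|L|$ and the quotient is $3n/|H|>3$.
\end{itemize}
A small correction: the two-vertex quotient is $(d_u+d_v+2[uv\in E])/2$, so it equals $3$ or $4$, not $3$ or $5$.

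The missing idea is to use vectors supported on two \emph{disjoint, mutually non-adjacent} sets $X,Y$, with $x=1/|X|$ on $X$, $x=-1/|Y|$ on $Y$, and $0$ elsewhere. Its quotient is $\bigl(\partial(X)/|X|^2+\partial(Y)/|Y|^2\bigr)/\bigl(1/|X|+1/|Y|\bigr)$. This is at most $2$ as soon as $\partial(X)\le 2|X|$ and $\partial(Y)\le 2|Y|$. For example, in $K_{2,n-2}$ take two single vertices of the large side, and in $Q_3$ take the two edges of an induced $2K_2$.

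This turns the theorem into a purely combinatorial statement: every $3$-connected graph on $n$ vertices with $2n-4$ edges contains such a pair. Proving that is the real work, and it needs a genuine counting argument. In the paper it runs roughly as follows.
\begin{itemize}
\item In a counterexample, any connected set $A$ with $\partial(A)\le 2|A|$ satisfies $6|A|\ge n+3$. This comes from applying $\sum_v(d_v-4)=-8$ to the components of $V\setminus N[A]$, none of which may satisfy the boundary condition.
\item Hence the degree-$3$ vertices are pairwise non-adjacent, and $n$ is large.
\item Many disjoint connected low-boundary sets are then built from small connected pieces of the high-degree part and counted against each other to reach a contradiction.
\end{itemize}
Your proposed case split on $e(L)$ has no mechanism that produces any of this. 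As it stands, the proposal does not prove the theorem.
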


Since $e(K_{2,n-2})=2n-4$ and $\alpha(K_{2,n-2})=2$, Theorem \ref{thm::main1} immediately implies Conjecture \ref{conj::kolokolnikov}.

\begin{remark}
  The equality case of the upper bound in Theorem \ref{thm::main1} is not unique. In particular, when $n=10$, the complete bipartite graph $K_{2,n-2}$ and the Petersen graph are two examples of graphs attaining the bound.
\end{remark}

During the revision of this paper, we became aware of the recent preprint \cite{ZLRL26}, which also proves Conjecture \ref{conj::kolokolnikov} for $n\geq 123$.

\section{Preliminaries}

\subsection{Notation and terminology}
For $v\in V(G)$, let $d_G(v)$ denote the \emph{degree} of $v$, let $N_G(v)$ denote its \emph{open neighborhood}, and let $N_G[v]=N_G(v)\cup\{v\}$ denote its \emph{closed neighborhood}. We write $\delta(G)$ and $\Delta(G)$ for the minimum degree and maximum degree of $G$, respectively. For a vertex set $X\subseteq V(G)$, define $N_G(X)=\{v\in V(G)\backslash X: N_G(v)\cap X\neq\varnothing\}$ and $N_G[X]=X\cup N_G(X)$. For $S\subseteq V(G)$ and $v\in V(G)$, let $N_S(v)=N_G(v)\cap S$ and $d_S(v)=|N_S(v)|$. More generally, set $N_S(X)=N_G(X)\cap S$ and $N_S[X]=X\cup N_S(X)$. For $S\subseteq V(G)$, the graph $G[S]$ is the subgraph induced by $S$ and $G\backslash S$ is the subgraph induced by $V(G)\backslash S$. For $X\subseteq S$, let $e_S(X)=e(G[X])$. We also use the abbreviation $e(S)=e(G[S])$.

A subgraph $H$ of $G$ is a \emph{spanning subgraph} if $V(H)=V(G)$. A \emph{spanning tree} of a connected graph $G$ is a spanning subgraph of $G$ that is a tree, that is, connected and acyclic.

We denote by $K_r$, $P_r$, and $C_r$ the complete graph, path, and cycle on $r$ vertices, respectively. The graph $K_{1,r-1}$ is called a \emph{star}, and $2K_2$ denotes the disjoint union of two edges. The \emph{cyclomatic number} of a connected graph $H$ is $\mu(H)=e(H)-|V(H)|+1$.
A set $D\subseteq V(G)$ is a \emph{dominating set} if every vertex in $V(G)\backslash D$ has a neighbor in $D$, equivalently, if $N_G[D]=V(G)$. It is a \emph{connected dominating set} if, in addition, the induced subgraph $G[D]$ is connected. The \emph{distance} between two vertices $u$ and $v$ is the length of a shortest $(u,v)$-path.

\subsection{Some results on algebraic connectivity and $3$-connected graph}
We will need the following two results about algebraic connectivity.

\begin{lem}[Fiedler \cite{F73}]\label{lem::1}
  If $H$ is a spanning subgraph of $G$, then
  \[
    \alpha(H)\leq \alpha(G).
  \]
\end{lem}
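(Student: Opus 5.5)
This is Fiedler's monotonicity result, so the plan is to compare Rayleigh quotients of the two Laplacians using the Courant--Fischer characterization of $\lambda_2$. Since $H$ is spanning, $V(H)=V(G)$, so $L(H)$ and $L(G)$ are $n\times n$ matrices indexed by the same vertex set. For any vector $x\in\mathbb{R}^n$ we have the standard quadratic-form identity
\[
x^{T}L(G)x=\sum_{uv\in E(G)}(x_u-x_v)^2 .
\]
Because $E(H)\subseteq E(G)$ and every summand is nonnegative, this gives $x^{T}L(H)x\le x^{T}L(G)x$ for all $x$. Equivalently, $L(G)-L(H)=L(G')$, where $G'$ is the graph on $V(G)$ with edge set $E(G)\setminus E(H)$, and this matrix is positive semidefinite.

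Next I invoke the variational formula. For every graph the all-ones vector $\mathbf{1}$ spans the kernel direction belonging to $\lambda_1=0$, so
\[
\alpha(G)=\min_{x\perp \mathbf{1},\,x\neq 0}\frac{x^{T}L(G)x}{x^{T}x},
\]
and the same formula holds for $H$ over the same set of test vectors. Now take $x$ to be a unit Fiedler vector of $H$, that is, $x\perp\mathbf{1}$ and $x^{T}L(H)x=\alpha(H)$. Then
\[
\alpha(H)=x^{T}L(H)x\le x^{T}L(G)x\le \max_{y\perp\mathbf{1},\,\|y\|=1} y^{T}L(G)y,
\]
which is the wrong direction. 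So instead I start from a unit Fiedler vector $y$ of $G$. This gives
\[
\alpha(G)=y^{T}L(G)y\ge y^{T}L(H)y\ge \min_{x\perp\mathbf{1},\,\|x\|=1}x^{T}L(H)x=\alpha(H),
\]
which is the desired inequality.

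There is no real obstacle here. The only points needing care are to apply the minimization to the vector attaining $\alpha(G)$, not the one attaining $\alpha(H)$, and to note that $\mathbf{1}$ lies in the kernel of both Laplacians, so the constraint set $\{x\perp\mathbf{1}\}$ is the same for both. Alternatively, one can cite Weyl's inequality $\lambda_2(A+B)\ge\lambda_2(A)+\lambda_1(B)$ with $A=L(H)$, $B=L(G')$ and $\lambda_1(B)=0$.
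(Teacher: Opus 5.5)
Your proof is correct: the paper gives no proof of this lemma and simply cites Fiedler, and your argument is the standard Courant--Fischer proof of that cited result. You evaluate the Rayleigh quotient of $L(H)$ at a unit Fiedler vector of $G$ and use $x^{\mathsf T}L(H)x\le x^{\mathsf T}L(G)x$ on the common constraint set $\{x\perp\mathbf 1\}$ (equivalently, Weyl's inequality with the positive semidefinite matrix $L(G)-L(H)$).

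In a final write-up, delete the discarded first chain of inequalities. Also replace ``$\mathbf 1$ spans the kernel'', which is false when the graph is disconnected, by the fact you actually need: $\mathbf 1$ is an eigenvector for the smallest eigenvalue $0$, which holds for every graph and suffices for the min-formula for $\lambda_2$.
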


\begin{lem}[Cvetkovi\'{c}, Rowlinson, and Simi\'{c} \cite{DPS10}]\label{lem::2}
  For any graph $G$ and any $U\subseteq V(G)$,
  \[
    \alpha(G)\leq \alpha(G-U)+|U|.
  \]
\end{lem}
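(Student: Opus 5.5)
The plan is to reduce to the case $|U|=1$ and then iterate. Suppose we know $\alpha(G)\leq \alpha(G-v)+1$ for every graph $G$ and every vertex $v$ with $|V(G)|\geq 3$. Order $U=\{u_1,\dots,u_k\}$ and apply this bound successively to $G$, $G-u_1$, $G-\{u_1,u_2\}$, and so on. Summing the $k$ inequalities telescopes to $\alpha(G)\leq \alpha(G-U)+|U|$.

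We assume throughout that $G-U$ has at least two vertices, so that $\alpha(G-U)$ is defined as a second eigenvalue. This restriction is genuinely needed: with the convention $\alpha(K_1)=0$, the inequality fails for $G=K_n$ and $|U|=n-1$, since $\alpha(K_n)=n>n-1$. All intermediate graphs in the iteration then also have at least two vertices.

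For the single-vertex step, use the Courant--Fischer characterization
\[
  \alpha(G)=\min\Big\{\frac{x^{\top}L(G)x}{x^{\top}x} : x\neq 0,\ x\perp \mathbf{1}\Big\}
  =\min_{x\neq 0,\ x\perp\mathbf{1}} \frac{\sum_{uw\in E(G)}(x_u-x_w)^2}{\sum_u x_u^2}.
\]
Let $H=G-v$ and let $y$ be a unit eigenvector of $L(H)$ for $\alpha(H)$ with $y\perp\mathbf{1}_{V(H)}$. Extend $y$ to a vector $x$ on $V(G)$ by setting $x_v=0$. Then $x\perp\mathbf{1}_{V(G)}$ and $x^{\top}x=1$, so $x$ is an admissible test vector.

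The edges of $G$ split into the edges of $H$ and the edges at $v$. This gives
\[
  x^{\top}L(G)x=y^{\top}L(H)y+\sum_{u\in N_G(v)}(y_u-0)^2\leq \alpha(H)+\sum_{u\in V(H)}y_u^2=\alpha(H)+1.
\]
Hence $\alpha(G)\leq \alpha(G-v)+1$, which completes the single-vertex step.

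There is no real obstacle here. The only points needing care are that the zero-extension preserves orthogonality to $\mathbf{1}$, which is the key observation making the test vector admissible, and the degenerate-size convention discussed above.
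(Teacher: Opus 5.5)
The paper gives no proof of this lemma; it simply quotes it from Cvetkovi\'c--Rowlinson--Simi\'c, so there is no in-paper argument to compare against. Your proof is correct and self-contained.

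\textbf{Standard route.} The usual textbook argument goes through interlacing. The principal submatrix of $L(G)$ indexed by $V(G)\setminus U$ equals $L(G-U)+D$, where $D$ is diagonal with entries $d_U(u)\le |U|$. Cauchy interlacing gives $\alpha(G)=\lambda_2(L(G))\le \lambda_2(L(G-U)+D)$, and Weyl's inequality then gives $\lambda_2(L(G-U)+D)\le \alpha(G-U)+|U|$. Your zero-extension test vector is the variational content of that interlacing step, and it is more elementary.

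\textbf{The induction is unnecessary.} You can extend the eigenvector $y$ of $G-U$ by zeros on all of $U$ at once. This gives
$x^{\top}L(G)x=y^{\top}L(G-U)y+\sum_{u\in V(G)\setminus U} d_U(u)\,y_u^2\le \alpha(G-U)+|U|$
directly, because each vertex of $G-U$ has at most $|U|$ neighbours in $U$.

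\textbf{Your size caveat.} Requiring $G-U$ to have at least two vertices is right, and it exposes a genuine imprecision in the statement as quoted (``for any $U$''). Under the convention $\alpha(K_1)=0$, your $K_n$ example with $|U|=n-1$ (or simply $K_2$) shows the inequality fails. This never matters in the paper, because its only use of the lemma is with $U$ a vertex cut, so $G-U$ is disconnected and has at least two vertices.
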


To prove Theorem \ref{thm::main1}, we first establish the following structural theorem, which plays a crucial role in the proof of Theorem \ref{thm::main1}. For two disjoint subsets $S,T\subseteq V(G)$, let $E_G(S,T)$ be the set of edges with one endpoint in $S$ and the other in $T$, and let $e_G(S,T)=|E_G(S,T)|$. For $A\subseteq V(G)$, define $\partial_G(A):=e_G(A,V(G)\backslash A)$.
A nonempty set $A\subseteq V(G)$ is called a \emph{good set} if $\partial_G(A)\leq 2|A|$. Two nonempty disjoint good sets $X$ and $Y$ form a \emph{good pair} if $e_G(X,Y)=0$.

A graph with at least four vertices is \emph{$3$-connected} if deleting any set of at most two vertices leaves a connected graph; equivalently, it has no vertex cut of size at most two.

\begin{lem}\label{thm::main2}
  Let $G$ be a $3$-connected graph on $n\geq 4$ vertices, and suppose that $e(G)=2n-4$. Then $G$ contains a good pair.
\end{lem}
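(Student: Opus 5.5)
The plan is to take both members of the good pair to be single vertices. For a singleton $\{v\}$ we have $\partial_G(\{v\})=d_G(v)$, so $\{v\}$ is a good set exactly when $d_G(v)\leq 2$. So it suffices to find two nonadjacent vertices $u,v$ with $d_G(u),d_G(v)\leq 4$. Then $X=\{u\}$ and $Y=\{v\}$ are disjoint good sets with $e_G(X,Y)=0$, which is a good pair.

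Let $L=\{v\in V(G): d_G(v)\leq 4\}$, and suppose for contradiction that no two vertices of $L$ are nonadjacent, i.e.\ $G[L]$ is a clique. We collect three facts.
\begin{enumerate}[(i)]
\item Every vertex of $L$ is adjacent to the other $|L|-1$ vertices of $L$ and has degree at most $4$. Hence $|L|-1\leq 4$, so $|L|\leq 5$.
\item Since $G$ is $3$-connected on $n\geq 4$ vertices, $\delta(G)\geq 3$. Indeed, the neighbourhood of a vertex of degree at most $2$ would be a vertex cut of size at most two, unless $G$ is too small; but $n\geq 4$ and $3$-connectivity rule this out.
\item Every vertex outside $L$ has degree at least $5$ by definition of $L$.
\end{enumerate}

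Counting degrees using (ii) and (iii) gives
\[
4n-8=2e(G)=\sum_{v}d_G(v)\geq 3|L|+5(n-|L|)=5n-2|L|.
\]
Therefore $n\leq 2|L|-8\leq 2$, using $|L|\leq 5$ from (i). This contradicts $n\geq 4$. So $L$ is not a clique, and two nonadjacent vertices of $L$ give the required good pair.

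I do not expect a genuine obstacle here. The only points to check carefully are the bound $|L|\leq 5$ coming from the clique structure, and the minimum-degree consequence $\delta(G)\geq 3$ of $3$-connectivity. The lemma presumably matters later, in the spectral part of the argument, rather than being hard in itself. If one wanted a more robust statement, for instance with weaker connectivity, the same counting works as long as $\delta(G)\geq 3$. With smaller minimum degree one would instead merge low-degree vertices into larger good sets.
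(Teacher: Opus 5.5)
\textbf{There is a genuine gap: the reduction to singletons fails at its first step.} You correctly compute $\partial_G(\{v\})=d_G(v)$ and $2|\{v\}|=2$. So a singleton $\{v\}$ is good exactly when $d_G(v)\le 2$, and your next sentence, which replaces this by $d_G(v)\le 4$, is wrong. By your own fact (ii), a $3$-connected graph has $\delta(G)\ge 3$, so \emph{no} singleton is ever a good set, and a good pair can never consist of two single vertices. Your degree count is valid, but it only shows that $G$ has two nonadjacent vertices of degree at most $4$, which says nothing about good pairs.

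The intended application confirms this. For nonadjacent $u,v$, the test vector in the proof of Theorem~\ref{thm::main1} with $X=\{u\}$, $Y=\{v\}$ has Rayleigh quotient $(d_G(u)+d_G(v))/2\ge 3$, not at most $2$. Since the lemma implies Kolokolnikov's conjecture, a three-line counting proof should have been a warning sign.

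\textbf{What is missing.} Good sets must contain several vertices and be internally dense. From $\partial_G(A)=\sum_{v\in A}d_G(v)-2e(G[A])\ge 3|A|-2e(G[A])$, a good set needs $e(G[A])\ge |A|/2$. The smallest examples are an edge joining two degree-$3$ vertices ($\partial_G=4$), or a degree-$4$ vertex with two of its degree-$3$ neighbours ($\partial_G\le 6$). The whole difficulty is to find two such sets with no edge between them.

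\textbf{How the paper does it.} The paper argues by contradiction.
\begin{itemize}
\item The case $n\le 9$ is handled separately via Lemma~\ref{lem::5}.
\item Claim~\ref{claim::1} shows that in a counterexample every connected good set $A$ satisfies $6|A|\ge n+3$. Otherwise, using $\sum_v(d_G(v)-4)=-8$, some component of $G-N_G[A]$ would itself be good and would form a good pair with $A$.
\item This forces the degree-$3$ vertices to be independent and $n\ge 32$.
\item The weight decomposition of Lemma~\ref{lem::3} then yields many good sets $N_S[P_i]$, each of which Claim~\ref{claim::1} forces to be large. The counting in Claims~\ref{claim::6}--\ref{claim::8} shows they cannot all be.
\end{itemize}
A singleton argument cannot replace any of this structure, so you need to start over with multi-vertex good sets.
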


\section{Proof of Theorem \ref{thm::main1}}

In this section, we prove Theorem \ref{thm::main1}, assuming Lemma \ref{thm::main2} holds.

\begin{proof}[Proof of Theorem~\ref{thm::main1}. (Assuming Lemma \ref{thm::main2})]
  Let $G$ satisfy the hypotheses of Theorem \ref{thm::main1}. If $G$ is disconnected, then $\alpha(G)=0$, and there is nothing to prove. Suppose next that $G$ has a vertex cut $U$ with $1\leq |U|\leq 2$. Since $G-U$ is disconnected, Lemma \ref{lem::2} gives $\alpha(G)\leq \alpha(G-U)+|U|=|U|\leq 2$. If $e(G)<2n-4$, add edges until a graph $G'$ with $e(G')=2n-4$ is obtained. Adding edges preserves $3$-connectivity. Since $G$ is a spanning subgraph of $G'$, Lemma \ref{lem::1} yields $\alpha(G)\leq \alpha(G')$. Thus we can assume that $G$ is a $3$-connected graph with exactly $2n-4$ edges.

  By Lemma \ref{thm::main2}, $G$ admits a good pair $(X,Y)$. Let $\mathbf{x}\in\mathbb{R}^n$ be an $n$-dimensional real vector. The coordinate of \(\mathbf x\) at a vertex $v$ will be denoted by \(x_v\). The vector $\mathbf{x}$ is defined entrywise as
  $$
    x_v=
    \begin{cases}
      \frac{1}{|X|},  & v\in X,                        \\
      -\frac{1}{|Y|}, & v\in Y,                        \\
      0,              & v\in V(G)\backslash (X\cup Y).
    \end{cases}
  $$
  Then $\mathbf x \perp \mathbf e$, where $\mathbf e$ is the all-one vector. By the Rayleigh quotient characterization, we obtain
  $$
    \begin{aligned}
      \alpha (G) & =\inf_{\mathbf{x}\in\mathbb{R}^n\backslash{\{\mathbf 0\}}, \mathbf x \perp \mathbf e} \frac{\mathbf x^{\mathsf T}L(G)\mathbf x}{\mathbf x^{T} \mathbf x} \\
                 & \leq \frac{\mathbf x^{\mathsf T}L(G)\mathbf x}{\mathbf x^{T} \mathbf x}                                                                                  \\
                 & =\frac{\sum_{uv \in E(G)}(x_{u}-x_{v})^2}{\sum_{u \in V(G)}x_{u}^{2}}                                                                                    \\
                 & \le \frac{(\frac{1}{|X|})^2 \cdot 2|X|+(\frac{1}{|Y|})^2 \cdot 2|Y|}{(\frac{1}{|X|})^2 \cdot |X|+(\frac{1}{|Y|})^2 \cdot |Y|}                            \\
                 & =2.
    \end{aligned}
  $$
  This completes the proof.
\end{proof}

\section{Proof of Lemma \ref{thm::main2}}

The  elementary lemma will be used to deal with the small order cases in the proof of Lemma \ref{thm::main2}.

\begin{lem}\label{lem::5}
  If $H$ is connected, $\Delta(H)\leq 3$, and $H$ has no induced copy of $2K_2$, then $|V(H)|\leq 7$.
\end{lem}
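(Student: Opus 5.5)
The plan is a distance argument. Because $H$ has no induced $2K_2$, its diameter is at most $2$, and bounded degree then limits the number of vertices.

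First, suppose $H$ contains an induced path $P=u_0u_1u_2u_3u_4$ on five vertices. The edges $u_0u_1$ and $u_3u_4$ are vertex-disjoint. Since $P$ is induced, none of the pairs $u_0u_3$, $u_0u_4$, $u_1u_3$, $u_1u_4$ is an edge. So these two edges span an induced $2K_2$, which is a contradiction. Now take two vertices at distance $d$ in $H$; a shortest path between them is an induced path on $d+1$ vertices. Hence the diameter of $H$ is at most $3$.

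Next we rule out distance exactly $3$. Let $u_0u_1u_2u_3$ be a shortest path between two vertices at distance $3$. Then $u_0u_1$ and $u_2u_3$ are disjoint edges. Because the path is shortest, $u_0\not\sim u_2$, $u_0\not\sim u_3$ and $u_1\not\sim u_3$. The only possible extra adjacency is $u_1u_2$, but that is an edge of the path joining the two chosen edges, not an edge between them. To settle this cleanly, I would instead take $u$ with a vertex $w$ at distance $3$, and a shortest path $u\,a\,b\,w$. Choose a neighbor $c\neq b$ of $w$ if one exists, and consider the edges $ua$ and $wc$. Here $c$ is at distance at least $2$ from $u$, so $c\not\sim u$. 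If $c\sim a$, then $d(u,w)\le 2$, contradicting $d(u,w)=3$. So $ua$ and $wc$ form an induced $2K_2$.

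If $w$ has no neighbor other than $b$, do the symmetric thing at $u$ using $w$'s edge $bw$ together with a neighbor $a'\ne a$ of $u$. If both $u$ and $w$ are leaves, use the edges $ua$ and $bw$ directly. In that last case the only possible edge between them is $ab$, which again is not an edge between the two chosen edges. I notice this is unclear: an induced $2K_2$ requires that no edge joins the two edges, and $ab$ is such an edge. So distance $3$ is not automatically excluded; the path $P_4$ itself is $2K_2$-free and has diameter $3$.

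I therefore fall back on a diameter-$3$ counting argument. Fix a vertex $v$. There are at most $3$ vertices at distance $1$ and at most $6$ at distance $2$. For the vertices at distance $3$, each one $z$ has a neighbor $y$ at distance $2$, and $y$ has a neighbor $x$ at distance $1$. The edges $vx'$ and $yz$, where $x'$ is a neighbor of $v$ not adjacent to $y$, would form an induced $2K_2$. This forces every level-$2$ vertex that has a neighbor at level $3$ to be adjacent to all neighbors of $v$.

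The main obstacle is turning these constraints into the sharp bound $|V(H)|\le 7$. I would argue that a level-$2$ vertex $y$ adjacent to all of $N(v)$, with $|N(v)|=3$, has no spare degree for a level-$3$ neighbor. So level $3$ can be nonempty only when $d(v)\le 2$. Choosing $v$ of maximum degree and handling small $\Delta$ separately (paths and cycles that are $2K_2$-free have at most $5$ vertices) then gives a short case analysis.

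In the main case, the diameter is at most $2$ with $\Delta=3$, so $|V(H)|\le 10$. I would then show that $8$, $9$ or $10$ vertices force an induced $2K_2$. When $|V(H)|\ge 8$, some level-$2$ vertex has at most one neighbor in $N(v)$. Pairing that edge with an edge at $v$ avoiding its neighborhood gives an induced $2K_2$, and this pigeonhole step is the delicate counting I expect to grind through.
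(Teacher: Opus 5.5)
\textbf{There is a genuine gap: the step that actually proves the lemma, excluding $8\le|V(H)|\le 10$, is not carried out, and the mechanism you sketch for it fails.}

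Your opening steps are correct. There is no induced $P_5$. If $z$ is at distance $3$ from $v$ and $y\in N(z)$ is at distance $2$, then $y$ must be adjacent to all of $N(v)$, since otherwise $vx'$ and $yz$ induce $2K_2$. Hence a vertex $v$ of degree $3$ has eccentricity at most $2$, and $|V(H)|\le 1+3+6=10$. This is a statement about $v$ only, not about the diameter. The bull graph (a triangle with pendant vertices at two of its corners) is $2K_2$-free with $\Delta=3$ and diameter $3$.

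Put $L_1=N(v)=\{x_1,x_2,x_3\}$ and $L_2=V(H)\setminus N[v]$. Suppose $y\in L_2$ has a unique neighbor $x\in L_1$. Then:
\begin{itemize}
\item the edge $xy$ can never be paired with an edge $vx'$, because the edge $vx$ joins them;
\item an edge $yz$ with $z\in L_2$ pairs with $vx'$ only if some $x'\in L_1$ misses both $y$ and $z$, which fails whenever $z$ is adjacent to the other two vertices of $L_1$ (allowed by $\Delta\le 3$);
\item $y$ may have no neighbor in $L_2$ at all.
\end{itemize}
So the pigeonhole fact that some level-$2$ vertex has only one neighbor in $N(v)$ gives no contradiction by itself.

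The missing ingredient is a second kind of forbidden pair: two edges that both hang off $L_1$. Call $y\in L_2$ \emph{private} to $x_i$ if $N(y)\cap L_1=\{x_i\}$.
\begin{enumerate}
\item[(i)] Every edge $yz$ inside $L_2$ satisfies $L_1\subseteq N(y)\cup N(z)$; otherwise pair it with $vx'$.
\item[(ii)] If $y,y'$ are private to distinct nonadjacent $x_i,x_j$, then either $x_iy$ and $x_jy'$ induce $2K_2$, or $yy'\in E(H)$, which contradicts (i).
\end{enumerate}
Let $m=|L_2|\ge 4$. Since $m\le e(L_1,L_2)\le 6-2e(L_1)$, you get $e(L_1)\le 1$ and at least $2m-6+2e(L_1)$ private vertices.
\begin{itemize}
\item If $x_1x_2$ is the only edge in $L_1$, then $e(L_1,L_2)=m=4$. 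Every vertex of $L_2$ is private, and both $x_1$ and $x_3$ have private neighbors, contradicting (ii).
\item If $e(L_1)=0$, then (ii) puts all private vertices on a single vertex, say $x_1$. So there are exactly two of them, $y_1,y_2$, and $m=4$. Tightness of the count makes the remaining $z_1,z_2$ both adjacent to $x_2$ and $x_3$. The pairs $x_1y_a$, $x_2z_b$ then force $y_az_b\in E(H)$ for all $a,b$, so $d(z_1)\ge 4$, a contradiction.
\end{itemize}

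With this inserted, plus the $\Delta\le 2$ case as you describe, your BFS route closes. It is genuinely different from the paper's proof. The paper splits on whether $H$ has an induced $C_5$. If it does, $C$ dominates and $\partial_H(C)\le 5$ leaves room for at most two outside vertices. If not, a minimal connected dominating set must induce a clique, which gives at most $6$ vertices.
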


\begin{proof}
  If $v_1v_2v_3v_4v_5$ is an induced path, then the subgraph induced by $\{v_1,v_2,v_4,v_5\}$ would be $2K_2$. Thus $H$ has no induced $P_5$.

  First suppose that $H$ contains an induced $5$-cycle $C=v_1v_2v_3v_4v_5v_1$. We claim that $V(C)$ is a dominating set. Otherwise, choose a shortest path from a vertex of $V(H) \backslash V(C)$ to $V(C)$ such that the distance from that vertex to $V(C)$ is at least $2$, and let $xy$ be the last edge of this path before reaching $V(C)$, where $y$ has a neighbor in $C$ and $x$ has none. Since $y$ is adjacent to $x$ and $\Delta(H)\le3$, it has at most two neighbors on $V(C)$. Furthermore, there exists an edge $uv$ in $C$ such that  neither of its endpoints is adjacent to $y$. Then the two edges $xy$ and $uv$ induce a copy of $2K_2$, a contradiction.

  Hence every vertex outside $V(C)$ has a neighbor on $V(C)$. If such a vertex $x$ has a neighbor in $V(C)$, we may assume without loss of generality that $x$ is adjacent to $v_1$, then the vertex $x$ must also be adjacent to $v_{3}$ or $v_{4}$, otherwise the edges $xv_1$ and $v_{3}v_{4}$ would induce a $2K_2$. Thus every vertex outside $V(C)$ is incident to at least two edges with one endpoint in $V(C)$. Each vertex of $C$ already has two neighbors on the cycle and has degree at most three, so $\partial_{H}(C) \le 5$. There are therefore at most two vertices outside $C$, and $|V(H)|\le7$.

  Now suppose that $H$ has no induced $C_5$. Choose a minimal connected dominating set $D$. We claim that $H[D]$ contains no induced $3$-path $P=abc$. We proceed by contradiction. Assume that $H[D]$ contains an induced $3$-path $P$. By the minimality of $D$, the set $D\backslash\{a\}$ is not a connected dominating set. If $D\backslash\{a\}$ is not a dominating set, then there is a vertex $a'\notin D$ with $N_H(a')\cap D=\{a\}$. In particular, $a'$ is adjacent to $a$ and to neither $b$ nor $c$. If instead $D\backslash\{a\}$ still dominates $H$ but $H[D\backslash\{a\}]$ is disconnected, then $b$ and $c$ lie in the same component because $bc\in E(H)$. Choose another component $Q$ in $D\backslash\{a\}$. Since $H[D]$ is connected, $a$ has a neighbor $a'\in Q$. This vertex $a'$ is adjacent to neither $b$ nor $c$, since either adjacency would place $a'$ in the component containing $b$ and $c$. Thus in both cases there is a vertex $a'$ adjacent to $a$ and to neither $b$ nor $c$.

  Applying the same argument to $c$ gives a vertex $c'$ adjacent to $c$ and to neither $a$ nor $b$. Clearly, the vertices $a',a,b,c,c'$ are pairwise distinct. Among them, the only possible edge in addition to $a'a$, $ab$, $bc$, $cc'$ is $a'c'$. Hence they induce a $P_5$ when $a'c'\notin E(H)$ and an induced $C_5$ when $a'c'\in E(H)$. Both cases lead to a contradiction.

  It follows that $H[D]$ is connected and has no induced $P_3$, and therefore $H[D]$ is a complete graph. Let $|D|=d$. Since $\Delta(H)\le3$, we have $d\le4$. Each vertex of $D$ has $d-1$ neighbors in $D$ and therefore at most $4-d$ neighbors outside $D$. Since $D$ dominates $H$, $|V(H)|\le d+d(4-d)=d(5-d)\le6$.
  Combining the two cases proves the lemma.
\end{proof}

We now introduce the notation used throughout the proof of Lemma \ref{thm::main2}. Assume that $G$ is a $3$-connected graph with $n$ vertices and $2n-4$ edges. Let $S=\{v\in V(G):d_G(v)=3\}$ and $T=V(G)\backslash S$. Set $|S|=s$ and $|T|=t$. Define $w(x)=4-d_T(x)$ and for any $P\subseteq T$, define $w(P)=\sum_{x\in P}w(x)$.

If $S$ is an independent set, then $e(T)=e(G)-e(S,T)=2n-4-3s=2t-s-4$ follows from the definition of $S$ and $s+t=n$. Furthermore, we have
\begin{equation}\label{eq::1}
  w(T)=\sum_{x\in T}w(x)=4t-2e(T)=4t-2(2t-s-4)=2s+8.
\end{equation}

Let $h$ denote the number of connected components of $G[T]$.

\begin{lem}\label{lem::3}
  There exist pairwise disjoint nonempty connected sets $P_1,\ldots,P_k\subseteq T$ such that $2\leq w(P_i)\leq 4(1\le i\le k)$. More precisely, for each connected component $H$ of $G[T]$, there exists a collection of pairwise disjoint connected sets together with at most one connected residual set $Q_H \subseteq H$. The union of all connected sets and all nonempty residual sets over the components of $G[T]$ is a partition of $T$. Moreover, every nonempty residual set $Q_H$ satisfies $w(Q_H)\leq 1$. Furthermore, if $S$ is an independent set, then
  \begin{equation}
    k\geq\left\lceil\frac{2s+8-h}{3}\right\rceil.
    \nonumber
  \end{equation}
\end{lem}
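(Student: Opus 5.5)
The plan is to work one component $H$ of $G[T]$ at a time. Inside $H$ I will run a leaf-to-root greedy procedure on a spanning tree, and then sum the resulting weight bounds over all components.

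\textbf{Peeling procedure.} Fix a component $H$ of $G[T]$, choose a spanning tree $\mathcal T_H$ of $H$, and root it. All weights $w(x)=4-d_T(x)$ are integers and satisfy $w(x)\le 4$, but they may be negative. For a vertex $v$ write $W(v)$ for the total weight of the subtree rooted at $v$.
\begin{enumerate}
\item If the root has $W\ge 2$, choose a deepest vertex $v$ with $W(v)\ge 2$. Then every child $c$ of $v$ has $W(c)\le 1$, by integrality.
\item If $w(v)\ge 2$, take $P=\{v\}$; its weight lies in $[2,4]$.
\item Otherwise $w(v)\le 1$. Add the children subtrees of $v$ to $\{v\}$ one at a time, keeping a running sum of their weights. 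Each increment is at most $1$ and the final total $W(v)$ is at least $2$. Hence the running sum first reaches a value $\ge 2$, and at that moment its value is exactly $2$.
\item The set $P$ so obtained is $v$ together with some of its children subtrees, so it is connected. Deleting $P$ leaves a tree on the remaining vertices of $H$, since $P$ is $v$ plus whole branches below $v$.
\item Repeat with the new $W$-values until the remaining tree has total weight $\le 1$. That remainder (possibly empty) is connected and is the residual set $Q_H$, with $w(Q_H)\le 1$.
\end{enumerate}
This yields the partition of $T$ and all the structural claims.

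\textbf{Key observation for the count.} A part of weight $3$ or $4$ arises only in step 2, so it is a singleton $\{v\}$ with $d_T(v)\le 1$. Moreover, weight $4$ means $d_T(v)=0$, i.e.\ $v$ is an isolated vertex of $G[T]$, so $H=\{v\}$ and $H$ has exactly one part and no residual. Every other part has weight at most $3$.

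\textbf{Counting, assuming $S$ is independent.} Let $k_H$ be the number of parts inside $H$. If $H$ is a single isolated vertex, then $w(H)=4=3k_H+1$. Otherwise every part has weight $\le 3$ and $w(Q_H)\le 1$, so $w(H)\le 3k_H+1$. Summing over the $h$ components and using \eqref{eq::1}, we get
\[
2s+8=w(T)\le 3k+h .
\]
Since $k$ is an integer, $k\ge\lceil(2s+8-h)/3\rceil$.

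\textbf{Main obstacle.} The naive bound on part weights is $4$, and it only gives a denominator of $4$ in the count. The crux is to show that weight $>2$ parts are singletons and that weight-$4$ parts exhaust their component. Both facts rest on the integrality step in the peeling procedure, namely that each child subtree has weight $\le 1$.
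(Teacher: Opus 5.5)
Your step 4 is false, and your count depends on it. In step 3 the set $P$ is $v$ together with only \emph{some} of its child branches. In step 2 it is just $\{v\}$, even when $v$ has children. Once $v$ is removed, every child branch of $v$ not placed in $P$ loses its only tree connection to the rest of $H$, and loses its only connection at all if $v$ is a cut vertex of $H$. So the remainder is a forest, step 5 is not well defined, and one component can end with several residual pieces.

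For example, let $v$ have $d_T(v)=2$ and be joined to a vertex of degree $4$ in each of two disjoint copies of $K_5$ minus an edge. Root at $v$, and inside each copy use the star centered at the attachment vertex. Each copy has weight $-1+1+1+0+0=1$, so $W(v)=4$ while every other subtree has $W\le 1$. Your procedure removes $P=\{v\}$ and leaves two disjoint residual sets of weight $1$ in the same component. This contradicts the ``at most one residual set $Q_H$'' part of the statement. It also undercuts your count: with $m$ residual pieces in $H$, your argument only gives $w(H)\le 3k_H+m$, and your key observation about singleton parts does nothing to control $m$.

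The missing ingredient is the degree bound the paper uses. A non-root vertex $v$ has at most $d_T(v)-1$ children in the spanning tree, and each leftover child piece has weight at most $1$. Hence $v$ together with its \emph{entire} current branch has weight at most $(4-d_T(v))+(d_T(v)-1)=3$, and at most $4$ when $v$ is the root. Integrality and $w\le 4$ alone cannot give this.

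Suppose you cut off the whole branch at $v$ instead of stopping at weight exactly $2$; this is what the paper's postorder merging of $v$ with all remaining child sets $Q_u$ does. Then the remainder stays a tree, and each component has at most one residual. The bound $w(H)\le 3k_H+1$ follows directly: every part has weight at most $3$, except possibly the part containing the root, which has weight at most $4$ and then leaves no residual.

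If you keep your exact-$2$ cut, you could still rescue the inequality by charging each detached branch to the part that detached it, since together they form a whole branch of weight at most $3$ (at most $4$ at the root). But that again needs the degree bound, and it still does not give the single-residual structure the statement claims.
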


\begin{proof}
  For each connected component of $G[T]$, fix an arbitrary spanning tree and choose
  a root vertex. Then we apply a postorder traversal of the rooted spanning tree, from the leaves toward the root.

  We first describe the procedure of the postorder traversal. The procedure starts from the leaves. When a vertex $v$ is processed, all subtrees attached to $v$ away from the root vertex have already been decomposed. Each such subtree has been partitioned into connected sets and possibly one remaining connected set containing the vertex adjacent to $v$ in that subtree. Denote these remaining sets by $Q_u$, where $u$ ranges over all neighbors of $v$ in the rooted spanning tree that are farther from the root. We then combine $v$ with all nonempty remaining sets $Q_u$ and define $$R_v=\{v\}\cup\bigcup_{u}Q_u.$$

  The set $R_v$ is connected. Indeed, each nonempty $Q_u$ contains the vertex $u$, and the tree edge $uv$ connects $u$ with $v$. Hence every such remaining set is attached to $v$, and therefore their union with $v$ induces a connected subgraph.

  If $w(R_v)\ge2$, extract $R_v$ as a new connected set and let $Q_v=\varnothing$. Otherwise $w(R_v)\le 1$, and we put $Q_v = R_v$. Repeating this procedure until the root vertex is reached produces, for each connected component $H$ of $G[T]$, a collection of pairwise disjoint connected sets and at most one remaining connected residual set $Q_H$.

  We next prove that the resulting decomposition satisfies $2\leq w(P_i)\leq 4(1\le i\le k)$ and $k\geq\left\lceil\frac{2s+8-h}{3}\right\rceil$. If $v$ is not the root vertex of its spanning tree, then it has at most $d_T(v)-1$ neighbors farther from the root vertex, we have $$w(R_v)=w(v)+\sum_uw(Q_u)\leq(4-d_T(v))+(d_T(v)-1)=3.$$
  If $v$ is the root vertex of its spanning tree, then it has at most $d_T(v)$ such neighbors, and hence $$w(R_v)\leq(4-d_T(v))+d_T(v)=4.$$
  This proves $2\leq w(P_i)\leq 4(1\le i\le k)$.

  Fix an arbitrary connected component $H$, and let $a$ denote the number of connected sets produced by this component. If the root vertex belongs to the residual set $Q_H$, then every connected set $P_{i}$ satisfies $w(P_{i})\leq  3$ and the residual set $Q_H$ satisfies $w(Q_{H})\le1$, so $w(H) \leq 3a+1$. If the root vertex belongs to some connected set $P_i$, then $w(P_{i})\leq 4$,  whereas every remaining connected set $P_j$ with $j\neq i$ satisfies $w(P_{j})\leq 3$, and there exists no residual set. So $w(H) \leq 3(a-1)+4=3a+1.$

  Since $S$ is an independent set, summing over all $h$ connected components and by \eqref{eq::1}, we obtain $2s+8=w(T)\leq 3k+h$,
  which is equivalent to $k\geq\left\lceil\frac{2s+8-h}{3}\right\rceil$.
\end{proof}

We now prove Lemma \ref{thm::main2}.

\begin{proof}[Proof of Lemma \ref{thm::main2}]
  Suppose for a contradiction that there exists a counterexample $G$, which is a $3$-connected graph on $n$ vertices with $2n-4$ edges.

  Since $G$ is $3$-connected, we have $\delta(G)\ge3$. For $n\leq7$, $e(G)\ge\frac{3n}{2}>2n-4$, so no $3$-connected graph satisfies the hypotheses.

  Suppose that $n=8$. Then $e(G)=2n-4=12$, so $G$ is cubic. By Lemma \ref{lem::5}, $G$ contains an induced $2K_2$. Let $X$ and $Y$ be the endpoint sets of its two edges. Then $e_G(X,Y)=0$, and
  $\partial_G(X)=\partial_G(Y)=3+3-2=4$.
  Thus $(X,Y)$ is a good pair, a contradiction.

  Suppose that $n=9$. Then $e(G)=2n-4=14$. Under the condition $\delta(G)\ge3$, the degree sequence must be $4,3,3,3,3,3,3,3,3$. Recall that $S=\{v\in V(G):d_G(v)=3\}$. Each vertex of $S$ has at most one neighbor outside $S$, namely the unique vertex of degree four, and hence $\delta(G[S])\ge2$. Since $G$ is 3-connected, the induced subgraph $G[S]$ is connected. By Lemma \ref{lem::5}, $G[S]$ contains an induced $2K_2$. Similar to the case with $n=8$, we can get a good pair, again a contradiction.

  Assume from now on that $n\geq 10$.

  \begin{claim}\label{claim::1}
    Let $A\subseteq V(G)$ be nonempty vertex set and $G[A]$ is connected. Assume that $\partial_G(A)\leq2|A|-\sigma$ and $\sigma\geq0$, then $6|A|\ge n+3+2\sigma$.
  \end{claim}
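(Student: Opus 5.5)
The plan is to compare $A$ with the set of vertices far from it, using that $G$ is a counterexample, so $G$ has no good pair. Write $a=|A|$, $p=\partial_G(A)\le 2a-\sigma$, $N=N_G(A)$ and $B=V(G)\backslash N_G[A]$. Every vertex of $N$ receives at least one edge from $A$, so $|N|\le p$ and
\[
  n=a+|N|+|B|\le a+p+|B|,\qquad\text{hence}\qquad |B|\ge n-a-p. \tag{$*$}
\]
The proof splits according to whether $B$ is empty.

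\emph{Case $B=\varnothing$.} Then $n\le a+p\le 3a-\sigma$, so $6a\ge 2n+2\sigma$. Since $n\ge 10>3$, this gives $6a\ge n+3+2\sigma$.

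\emph{Case $B\neq\varnothing$.} This is the main step. Let $C_1,\dots,C_c$ be the connected components of $G[B]$, with $c\ge1$.
\begin{itemize}
  \item Since $A\subseteq N_G[A]$ is disjoint from $B$, each $C_j$ is nonempty and disjoint from $A$, and $e_G(A,C_j)=0$ because no vertex of $B$ is adjacent to $A$.
  \item By assumption $A$ is a good set. If some $C_j$ were good, then $(A,C_j)$ would be a good pair, contradicting the choice of $G$. Hence $\partial_G(C_j)\ge 2|C_j|+1$ for every $j$.
  \item All edges leaving $C_j$ go to $N$, since there are no edges from $C_j$ to $A$ or to other components of $G[B]$. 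Summing gives $e_G(N,B)\ge 2|B|+c$.
  \item Each $C_j$ is connected, so $e(B)\ge |B|-c$, and $A$ is connected, so $e(A)\ge a-1$.
\end{itemize}
The four edge sets $E(G[A])$, $E_G(A,N)$, $E_G(N,B)$ and $E(G[B])$ are pairwise disjoint, and $e_G(A,N)=p$. Therefore
\[
  2n-4\ge e(A)+p+e_G(N,B)+e(B)\ge (a-1)+p+(2|B|+c)+(|B|-c)=a-1+p+3|B|.
\]
Substituting $(*)$ gives
\[
  2n-4\ge a-1+p+3(n-a-p),
\]
which rearranges to $n\le 2a+2p-3$. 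Using $p\le 2a-\sigma$, we get $n\le 6a-2\sigma-3$, that is, $6a\ge n+3+2\sigma$.

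The key point is to apply the no-good-pair condition componentwise to $G[B]$ rather than to $B$ as a whole. The per-component excess $c$ in $e_G(N,B)$ then cancels exactly against the loss of $c$ in the spanning-forest bound $e(B)\ge|B|-c$. A cruder count using only minimum degree $3$ yields only a bound of the form $10a\ge n+\cdots$, which is too weak.
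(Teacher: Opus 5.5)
Your proof is correct and follows essentially the same route as the paper. You use the same partition into $A$, $N_G(A)$ and the components of the remainder, the no-good-pair bound $\partial_G(C_j)\ge 2|C_j|+1$, the spanning-forest bound, $e(A)\ge |A|-1$, and $|N_G(A)|\le\partial_G(A)$, and you reach the same intermediate inequality $n\le 2|A|+2\partial_G(A)-3$ as the paper. The only difference is bookkeeping: you count the edges of $G$ directly, so the component count $c$ cancels at once, whereas the paper routes the same count through the degree-excess function $\chi$ and an auxiliary lower bound on the number of components. Your separate case $B=\varnothing$ is harmless but unnecessary, since the general edge count already covers it.
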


  \begin{proof}
    Let $B=N_G(A)$ and $C=V(G)\backslash N_G[A]$, where $|A|=a$ and $|B|=b$. Let $C_1,\cdots,C_k$ be the connected components of $G[C]$, and let $c_i=|C_i|$. Recall $\mu(C_i)=e(G[C_i])-c_i+1\geq 0$ be the cyclomatic number of the connected graph $G[C_i]$. There are no edges between $A$ and any $C_i$. Since $A$ is a good set, if some $C_i$ is also a good set, then there exists a good pair, which yields a contradiction. Hence, we have
    \begin{equation}\label{eq::5}
      \partial_G(C_i)\geq2c_i+1
    \end{equation}
    for $1\leq i\leq k$.

    For a vertex set $U$, define $\chi(U)=\sum_{v\in U}(d_G(v)-4)$. Since $e(G)=2n-4$, we have
    \begin{equation}\label{eq::6}
      \chi(V(G))=\sum_{v\in V(G)}(d_G(v)-4)=2e(G)-4n=-8.
    \end{equation}
    By inequality \eqref{eq::5},
    \begin{equation*}
      \begin{aligned}
        \chi(C_i)
         & =\sum_{v\in V(C_i)}(d_G(v)-4)           \\
         & =2e(G[C_i])+\partial_G(C_i)-4c_i        \\
         & =2(c_i-1+\mu(C_i))+\partial_G(C_i)-4c_i \\
         & =\partial_G(C_i)-2c_i-2+2\mu(C_i)       \\
         & \geq-1.
      \end{aligned}
    \end{equation*}
    Substituting these inequalities into \eqref{eq::6} gives
    $
      -8= \chi(V(G))=\chi(A)+\chi(B)+\sum_{i=1}^k\chi(C_i)
      \geq \chi(A)+\chi(B)-k,
    $
    and therefore
    \begin{equation}\label{eq::7}
      k\ge\chi(A)+\chi(B)+8.
    \end{equation}

    Using inequality \eqref{eq::5}, we have
    \begin{equation}\label{eq::8}
      e_G(B,C)=\sum_{i=1}^k\partial_G(C_i)\geq2|C|+k.
    \end{equation}
    On the other hand, summing degrees over $B$, we obtain
    $4b+\chi(B)=\sum_{v\in B}d_G(v)=2e_G(B)+\partial_G(A)+e_G(B,C)$.
    Consequently,
    \begin{equation}\label{eq::9}
      e_G(B,C)\leq 4b+\chi(B)-\partial_G(A).
    \end{equation}
    Combining \eqref{eq::7}--\eqref{eq::9} and using $|C|=n-a-b$ gives
    \begin{equation}\label{eq::10}
      6b\geq 2n-2a+\partial_G(A)+\chi(A)+8.
    \end{equation}
    Every vertex of $B$ has at least one neighbor in $A$, so $b\le \partial_G(A)$. Hence \eqref{eq::10} implies
    $
      6\partial_G(A)\geq 2n-2a+\partial_G(A)+\chi(A)+8$.
    Since $\chi(A)=2e_G(A)+\partial_G(A)-4a$,
    we obtain
    \begin{equation}\label{eq::11}
      4\partial_G(A)+6a\geq 2n+2e_G(A)+8.
    \end{equation}
    The graph $G[A]$ is connected, so $e_G(A)\geq a-1$. Substituting this into \eqref{eq::11} gives $4\partial_G(A)+4a\geq 2n+6$. Since $\partial_G(A)\leq2a-\sigma$, and therefore $4(2a-\sigma)+4a\ge2n+6$. Rearranging proves this claim.
  \end{proof}

  \begin{claim}\label{claim::2}
    The set $S$ is an independent set.
  \end{claim}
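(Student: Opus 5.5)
Suppose, for a contradiction, that two vertices $u,v\in S$ are adjacent. The plan is to show that the connected set $A=\{u,v\}$ is a good set that is too small to satisfy Claim~\ref{claim::1}.

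First compute the boundary of $A$. Since $d_G(u)=d_G(v)=3$ and $uv\in E(G)$,
\[
  \partial_G(A)=3+3-2=4=2|A|.
\]
Hence $A$ is a good set, and $G[A]$ is connected. We may therefore apply Claim~\ref{claim::1} with $\sigma=0$, which gives
\[
  6|A|=12\ge n+3,
\]
that is, $n\le 9$. This contradicts the standing assumption $n\ge 10$, so no two vertices of $S$ are adjacent, and $S$ is independent.

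It remains to check that Claim~\ref{claim::1} really applies here. Its proof uses only three facts: $A$ is a nonempty connected good set; the absence of a good pair forces $\partial_G(C_i)\ge 2c_i+1$ for every component $C_i$ of $G\backslash N_G[A]$; and the counting identity $\chi(V(G))=-8$. All three hold in our situation. The first is the computation above, the second holds because $G$ is a counterexample to Lemma~\ref{thm::main2}, and the third holds because $e(G)=2n-4$. The case $C=\varnothing$ is also covered: then $k=0$, and the claim's inequalities still go through, since \eqref{eq::7} and \eqref{eq::8} remain valid in that case.

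I do not expect a real obstacle; the only care needed is confirming that Claim~\ref{claim::1} needs nothing beyond the good-set condition on $A$. If more caution were wanted, the same contradiction could be reached directly: the bound $b\le\partial_G(A)=4$ in \eqref{eq::10}, with $a=2$ and $\chi(A)=-2$, gives $24\ge 2n-4+4-2+8$, so $n\le 9$.
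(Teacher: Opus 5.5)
Your proof is correct and is the paper's argument: for adjacent $u,v\in S$ you get $\partial_G(\{u,v\})=4=2|\{u,v\}|$, and Claim~\ref{claim::1} with $\sigma=0$ then forces $12\ge n+3$, contradicting $n\ge 10$. Your extra checks (the case $C=\varnothing$, and the direct computation from \eqref{eq::10} giving $24\ge 2n+6$) are also correct, though not needed.
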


  \begin{proof}
    Suppose that $u,v\in S$ and $uv\in E(G)$. Then $\partial_G(\{u,v\})=3+3-2=4=2|\{u,v\}|$. Claim \ref{claim::1}, with $\sigma=0$, gives $12=6|\{u,v\}|\geq n+3$,
    so $n\leq 9$, a contradiction.
  \end{proof}

  \begin{claim}\label{claim::3}
    If $x\in T$ and $d_T(x)\leq 2$, then there is a connected good set of size $d_G(x)-1$.
  \end{claim}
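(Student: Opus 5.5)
The plan is to build the good set directly as a star centred at $x$, using Claim \ref{claim::2} to count its boundary exactly. Write $d=d_G(x)$. Since $x\in T$, we have $d\geq 4$. From $d_T(x)\leq 2$ we get $d_S(x)=d-d_T(x)\geq d-2\geq 2$.

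So we may choose a set $Z\subseteq N_S(x)$ of exactly $d-2$ neighbours of $x$ that have degree $3$. Set $A=\{x\}\cup Z$. Then $|A|=d-1$, and $G[A]$ is connected because it contains the star with centre $x$ and leaves $Z$.

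It remains to compute $\partial_G(A)$ by counting, for each vertex of $A$, its edges that leave $A$:
\begin{itemize}
  \item For $x$: its neighbours inside $A$ are exactly the $d-2$ vertices of $Z$, so it sends $d-(d-2)=2$ edges out of $A$.
  \item For $z\in Z$: we have $d_G(z)=3$ and $z$ is adjacent to $x$. By Claim \ref{claim::2}, $S$ is independent, so $z$ has no neighbour in $Z\subseteq S$. Hence its only neighbour in $A$ is $x$, and it sends exactly $2$ edges out of $A$.
\end{itemize}
Summing gives $\partial_G(A)=2+2(d-2)=2d-2=2|A|$. Thus $A$ is a connected good set of size $d_G(x)-1$, as required.

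There is no real obstacle here. The only point needing care is that the independence of $S$ (Claim \ref{claim::2}) is what rules out edges inside $Z$, and such edges would spoil the exact count of the boundary. We also use $d\ge 4$, so that $Z$ is nonempty and $A$ really has size $d-1$. In the later argument this set will presumably be fed into Claim \ref{claim::1} with $\sigma=0$, giving $6(d_G(x)-1)\geq n+3$.
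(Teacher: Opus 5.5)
Your proof is correct and follows the paper's argument: the paper also takes $x$ together with $d_G(x)-2$ of its neighbours in $S$, uses Claim~\ref{claim::2} to see that this set induces a star, and obtains $\partial_G=d_G(x)+3(d_G(x)-2)-2(d_G(x)-2)=2d_G(x)-2$. Your vertex-by-vertex count of outgoing edges is the same computation written differently.
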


  \begin{proof}
    Since $d_T(x)\leq 2$, we have $d_S(x)=d_G(x)-d_T(x)\geq d_G(x)-2$. Choose any set $J\subseteq N_S(x)$ of size $d_G(x)-2$, and let $J_x=\{x\}\cup J$. By Claim \ref{claim::2}, $G[J_x]$ is a star. Then
    \begin{equation*}
      \begin{aligned}
        \partial_G(J_x)
         & =d_G(x)+3(d_G(x)-2)-2(d_G(x)-2) \\
         & =2d_G(x)-2
        =2|J_x|.
      \end{aligned}
    \end{equation*}
    Thus $J_x$ is a connected good set.
  \end{proof}

  \begin{claim}\label{claim::4}
    $n \geq 32$.
  \end{claim}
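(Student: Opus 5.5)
The plan is to argue by contradiction: assume $10\le n\le 31$ and play Claim~\ref{claim::1} off against the decomposition of Lemma~\ref{lem::3}. Claim~\ref{claim::1} says that every connected good set $A$ with slack $\sigma$ (that is, $\partial_G(A)\le 2|A|-\sigma$) has $|A|\ge (n+3+2\sigma)/6$. So the aim is to produce many connected good sets with controlled total size, and show that this cannot happen when $n$ is small.

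\textbf{Step 1: build a good set from each piece.} Apply Lemma~\ref{lem::3} to get pairwise disjoint connected sets $P_1,\dots,P_k\subseteq T$ with $2\le w(P_i)\le 4$. By Claim~\ref{claim::2}, $S$ is independent, so $k\ge \lceil (2s+8-h)/3\rceil$. For each $i$ put $A_i=P_i\cup N_S(P_i)$.
\begin{itemize}
\item The set $A_i$ is connected, since every vertex of $N_S(P_i)$ has a neighbour in $P_i$.
\item Edges leaving $A_i$ through $P_i$ go into $T\setminus P_i$. Their number is
\[
\sum_{x\in P_i}d_T(x)-2e(P_i)=4|P_i|-w(P_i)-2e(P_i)\le 2|P_i|+2-w(P_i).
\]
\item Each $j\in N_S(P_i)$ contributes $3-d_{P_i}(j)\le 2$ further boundary edges.
\end{itemize}
Hence $A_i$ is a connected good set with slack
\[
\sigma_i=(w(P_i)-2)+2\bigl(e(P_i)-|P_i|+1\bigr)+\sum_{j\in N_S(P_i)}\bigl(d_{P_i}(j)-1\bigr)\ \ge 0 .
\]
Claim~\ref{claim::1} then gives $6|A_i|\ge n+3+2\sigma_i$ for every $i$.

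\textbf{Step 2: sum over $i$.} Summing $6|A_i|-2\sigma_i\ge n+3$ over all $i$:
\begin{itemize}
\item The $T$-parts are disjoint, so $\sum_i 6|P_i|\le 6t$.
\item On the $S$-side, $6|N_S(P_i)|-2\sum_j(d_{P_i}(j)-1)=8|N_S(P_i)|-2e(P_i,S)\le 6e(P_i,S)$. Since each vertex of $S$ has degree $3$, the total over $i$ is at most $18s$.
\end{itemize}
This yields roughly
\[
k(n+3)\le 6t+18s-2\sum_i (w(P_i)-2)-4\sum_i\bigl(e(P_i)-|P_i|+1\bigr).
\]
Combined with $k\ge(2s+8-h)/3$ and $n=s+t$, this should bound $n$ from above in terms of $s/t$ and $h$.

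\textbf{Step 3: supplement with single vertices.} Vertices $x\in T$ with $d_T(x)\le 2$ give, via Claim~\ref{claim::3}, connected good sets of size $d_G(x)-1$. Claim~\ref{claim::1} then forces $d_G(x)\ge (n+9)/6$. This bounds $t$ against $s$, because a low-degree $T$-vertex must have many $S$-neighbours, and it restricts how small $s$ can be relative to $t$.

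\textbf{Step 4: control $h$ — the main obstacle.} Many components of $G[T]$ weaken the lower bound on $k$. I would handle each component $H$ of $G[T]$ separately. If $H$ has a residual set, then $w(H)\le 3a_H+1$, which costs only one unit. I would also use $3$-connectivity: a component $H$ together with $N_S(H)$ has boundary only into $T\setminus H$ through $S$, and each $S$-vertex sends its other edges elsewhere. From this I expect each component to have $w(H)$ large enough to dominate the $-h$ loss. Specifically, I would try to prove that every component yields at least one set $P_i$ for each $3$ units of its weight, with the residual absorbed. Once $h$ is absorbed, the inequalities from Steps~2 and~3 should combine into $n\ge 32$. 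I am least sure that the constants close exactly at $32$, and that is where I would expect to have to tighten the slack bookkeeping.
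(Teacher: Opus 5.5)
There is a genuine gap: the engine of your Steps 1, 2 and 4 points the wrong way for this claim. Substitute $n=s+t$ and $\sum_i w(P_i)\ge w(T)-r=2s+8-r$, and drop your extra nonpositive terms. Your summed inequality is then exactly the paper's Claim~\ref{claim::7}, $(k-6)t\le(14-k)s+k-16$. This is vacuous for $k\le 6$ and an \emph{upper} bound on $t$ for $k\ge 7$. Sharpening the lower bound on $k$ by controlling $h$ only tightens an upper bound, so small $n$ can never be excluded this way. The paper uses this inequality only afterwards, against large counterexamples, and even there it closes only with Claim~\ref{claim::6}, which itself relies on $n\ge 32$.

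Concretely, $n=14$ forces $s=8$, $t=6$, $e(T)=0$, and every $x\in T$ of degree $4$ with all neighbours in $S$. Then $k=6$, each $A_i=N_S[x]$ has $|A_i|=5$ and $\sigma_i=2$, Claim~\ref{claim::1} reads $30\ge 21$, and your sum reads $102\le 156$. Claim~\ref{claim::1} is simply too weak for $n\in\{14,15\}$. The size-$3$ sets of Claim~\ref{claim::3} give only $18\ge n+3$, which is also why your Step~3 bound $d_G(x)\ge (n+9)/6$ is empty there. Moreover, Claim~\ref{claim::1} only uses that the components of $G-N_G[A]$ are not good, which is weaker than the absence of a good pair.

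The paper handles these two values by exhibiting the pair directly. Take nonadjacent $x,y\in T$ with few common $S$-neighbours, and two-element sets $I_x\subseteq N_S(x)\setminus N_S(y)$ and $I_y\subseteq N_S(y)\setminus N_S(x)$. The $3$-vertex stars $\{x\}\cup I_x$ and $\{y\}\cup I_y$ each have boundary $6$ and no edges between them. So every such pair needs many common $S$-neighbours, and summing $|N_S(x)\cap N_S(y)|$ over pairs exceeds the available total $3s=24$. A cross-component good-pair count as in Claim~\ref{claim::8}, giving $h(h-1)\le 2s$, would also kill these cases, where $h=6$ resp.\ $h=5$. But that is a different use of the components than absorbing $h$ into the bound on $k$.

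Your Step~3 is the right ingredient for $n\ge 16$, but the short counting that finishes the proof is missing. Since $\delta(G)\ge 3$ and the degree sum is $4n-8$, one has $s\ge 8$ and $\sum_{x\in T}(d_G(x)-4)=s-8$. Together with $e(T)=2n-4-3s\ge 0$ this already gives $n\ge 14$. For $n\ge 16$, Claims~\ref{claim::3} and~\ref{claim::1} force $d_G(x)\ge 5$ for every $x\in L=\{x\in T: d_T(x)\le 2\}$, so $|L|\le s-8$. On the other hand $\sum_{x\in T}d_T(x)=2e(T)=4t-2s-8\ge 3(t-|L|)$, i.e.\ $3|L|\ge 2s+8-t$. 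Combining, $3(s-8)\ge 2s+8-t$, that is, $n=s+t\ge 32$. Neither Lemma~\ref{lem::3}, nor the slack bookkeeping, nor any control of $h$ is needed for this claim.
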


  \begin{proof}
    By Claim \ref{claim::2} and $e(G)=2n-4$, we have $e(T)=e(G)-e(S,T)=2n-4-3s\geq 0$. Hence $3s\leq 2n-4$. Since $\sum_{v\in V(G)}d_G(v)=2e(G)=4n-8$, we have $s\geq 8$, and consequently $n\geq 14$.

    First suppose that $n=14$. Then necessarily $s=8$, $t=n-s=6$, and $e(T)=2n-4-3s=0$. $\sum_{v\in V(G)}d_G(v)=2e(G)=4n-8$ gives $d_G(x)=4$ for every $x\in T$, so every vertex $x\in T$ has four neighbors in $S$. For distinct $x,y\in T$, let $c(x,y)=|N_S(x)\cap N_S(y)|$.
    Suppose that $c(x,y)\leq 2$. Then $|N_S(x)\backslash N_S(y)|\geq2$, and $|N_S(y)\backslash N_S(x)|\geq 2$.
    Choose two-element vertex sets $I_x\subseteq N_S(x)\backslash N_S(y)$ and $I_y\subseteq N_S(y)\backslash N_S(x)$,
    and let $J_x=\{x\}\cup I_x$ and $J_y=\{y\}\cup I_y$. The vertex sets $I_x$ and $I_y$ are disjoint, $S$ is independent, and $T$ has no edges. Moreover, no vertex of $I_x$ is adjacent to $y$, and no vertex of $I_y$ is adjacent to $x$. Hence $e_G(J_x,J_y)=0$. Each of $J_x,J_y$ is a star with three vertices, and $\partial_G(J_x)=\partial_G(J_y)=4+2\cdot3-2\cdot2=6$.
    Thus $(J_x,J_y)$ is a good pair, a contradiction. Therefore every one of the $\binom{6}{2}=15$ pairs $\{x,y\}\subseteq T$ must satisfy $c(x,y)\geq 3$. On the other hand, each $z\in S$ has three neighbors in $T$ and contributes to exactly three unordered pairs, so $\sum_{\{x,y\}\subseteq T}c(x,y)=3s=24$, whereas the preceding lower bound is at least $15\times 3=45$, a contradiction.

    Now suppose that $n=15$. Then necessarily $s=8$, $t=n-s=7$, and $e(T)=2n-4-3s=2$, and again $d_G(x)=4$ for every $x\in T$. Let $xy\notin E(T)$ and suppose that $c(x,y)\leq 2-\max\{d_T(x),d_T(y)\}$. Then $|N_S(x)\backslash N_S(y)|=d_S(x)-c(x,y)=d_G(x)-d_T(x)-c(x,y)\geq 2$, and the same inequality holds with $x$ and $y$ interchanged. Choose two-element vertex sets $I_x\subseteq N_S(x)\backslash N_S(y)$, $I_y\subseteq N_S(y)\backslash N_S(x)$ and let $J_x=\{x\}\cup I_x$ and $J_y=\{y\}\cup I_y$. Exactly as in the preceding paragraph, the vertex sets $I_x$ and $I_y$ are disjoint, $S$ is independent, $xy$ is a nonedge of $T$. Moreover, no vertex of $I_x$ is adjacent to $y$, and no vertex of $I_y$ is adjacent to $x$. Each of $J_x,J_y$ is a star with three vertices, and $\partial_G(J_x)=\partial_G(J_y)=4+2\cdot3-2\cdot2=6$. so $(J_x,J_y)$ is a good pair. Consequently every nonedge $xy$ of $T$ must satisfy
    \begin{equation}\label{eq::12}
      c(x,y)\geq 3-\max\{d_T(x),d_T(y)\}.
    \end{equation}
    The two edges of $T$ have only two possible configurations. If they are disjoint, the degree sequence of $T$ is $(1,1,1,1,0,0,0)$, and \eqref{eq::12} requires a total common-neighbor count of at least $3\binom{3}{2}+2(3\cdot4)+2\left(\binom{4}{2}-2\right)=41$
    over all nonedges. If the two edges are adjacent, the degree sequence is $(2,1,1,0,0,0,0)$, and the corresponding lower bound is $3\binom{4}{2}+2(4\cdot2)+4+2=40$.
    This partial sum cannot exceed the sum over all unordered pairs of vertices in $T$, which is $3s=24$. Since both lower bounds exceed $24$, we obtain a contradiction.

    Finally, suppose that $ n\geq 16$. Let $L=\{x\in T:d_T(x)\leq 2\}$  and $|L|=\ell$. If some $x\in L$ satisfies $d_G(x)=4$, Claim \ref{claim::3} yields a connected good set of size three. Claim \ref{claim::1} would then imply $18\geq n+3+2\sigma(\sigma\geq 0)$, which yields $n\leq 15$, a contradiction. Thus $d_G(x)\geq 5$ for every $x\in L$, Claim \ref{claim::2}  and $n=s+t$ gives
    \begin{equation}\label{eq::13}
      \ell \leq \sum_{x\in T}(d_G(x)-4)=4n-8-3s-4t= s-8.
    \end{equation}
    On the other hand, $\sum_{x\in T}d_T(x)=2e(T)=2(e(G)-e(S,T))=2(2n-4-3s)=4t-2s-8$.
    Every vertex $x$ outside $L$ has $d_T(x)\geq 3$, so $4t-2s-8\ge3(t-\ell)$, or equivalently
    \begin{equation}\label{eq::14}
      3\ell\geq 2s+8-t.
    \end{equation}
    Combining \eqref{eq::13} and \eqref{eq::14} gives $t \geq 32-s$, and hence $n=s+t\ge32$.
  \end{proof}

  \begin{claim}\label{claim::5}
    For any $P\subseteq T$, if $G[P]$ is connected, then $\partial_G(N_S[P])-2|N_S[P]|\leq 2-w(P)$. In particular, if $w(P)\geq 2$, then $N_S[P]$ is a connected good set and every counterexample $G$ must satisfy $6|N_S[P]|\geq n-1+2w(P)$.
  \end{claim}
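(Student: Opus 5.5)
Let $P\subseteq T$ with $G[P]$ connected, and write $A=N_S[P]=P\cup N_S(P)$ and $p=|P|$, $q=|N_S(P)|$. The plan is to count the boundary of $A$ directly, using Claim \ref{claim::2} (no edges inside $S$) and the fact that every vertex of $S$ has degree $3$. The edges leaving $A$ split into two kinds. The first kind are edges from $P$ to $T\setminus P$, numbering $\sum_{x\in P}d_T(x)-2e(P)$; no edge from $P$ to $S$ leaves $A$, since every $S$-neighbour of $P$ lies in $N_S(P)$. The second kind are edges from $N_S(P)$ to $T\setminus P$; a vertex $z\in N_S(P)$ has $d_P(z)\ge1$ neighbours in $P$ and $3-d_P(z)$ neighbours in $T\setminus P$. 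Hence
\[
\partial_G(A)=\sum_{x\in P}d_T(x)-2e(P)+3q-e_G(P,N_S(P)).
\]
Since $e_G(P,N_S(P))=\sum_{x\in P}d_S(x)$, this becomes
\[
\partial_G(A)-2|A|=\sum_{x\in P}\bigl(d_T(x)-d_S(x)\bigr)-2e(P)+q-2p.
\]

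Next we bound $q\le\sum_{x\in P}d_S(x)$ and $e(P)\ge p-1$, the latter because $G[P]$ is connected. This gives
\[
\partial_G(A)-2|A|\le\sum_{x\in P}d_T(x)-2(p-1)-2p=2-\sum_{x\in P}\bigl(4-d_T(x)\bigr)=2-w(P),
\]
which is the first assertion.

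For the second part, suppose $w(P)\ge2$. Then $\partial_G(A)\le 2|A|$, so $A$ is good, and it is nonempty. It is connected because $G[P]$ is connected and every vertex of $N_S(P)$ has a neighbour in $P$. We then apply Claim \ref{claim::1} with $\sigma=w(P)-2\ge0$, obtaining $6|A|\ge n+3+2(w(P)-2)=n-1+2w(P)$.

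The argument is essentially a single computation. The only delicate step is the boundary count: we must make sure that edges from $N_S(P)$ back into $P$ are not counted, and that there are no $S$–$S$ edges, which is exactly where Claim \ref{claim::2} is used.
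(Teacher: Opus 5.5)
Your proof is correct and follows the paper's argument essentially step for step. It uses the same boundary count $\partial_G(N_S[P])=\partial_{G[T]}(P)+3q-e_G(P,S)$, which relies on Claim~\ref{claim::2}, and the same two estimates $e(P)\ge p-1$ and $q\le e_G(P,S)$. It then applies Claim~\ref{claim::1} with $\sigma=w(P)-2$, exactly as the paper does.
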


  \begin{proof}
    Let $|P|=p$, $|N_S(P)|=q$, and $e_G(P,S)=r$. Then $\partial_G(N_S[P])=\partial_{G[T]}(P)+3q-r$.
    It follows that
    \begin{equation}\label{eq::15}
      \partial_G(N_S[P])-2|N_S[P]|=\partial_{G[T]}(P)-2p+q-r.
    \end{equation}
    Moreover, $\partial_{G[T]}(P)=\sum_{x\in P}(4-w(x))-2e_T(P)=4p-w(P)-2e_T(P)$.
    Since $G[P]$ is connected, $e_T(P)\geq p-1$, and since every vertex of $N_S(P)$ is incident to at least one edge to $P$, we have $r\geq q$. By \eqref{eq::15}, we have
    \begin{equation*}
      \begin{aligned}
        \partial_G(N_S[P])-2|N_S[P]| & =\partial_{G[T]}(P)-2p+q-r           \\
                                     & =4p-w(P)-2e_T(P)-2p+q-r              \\
                                     & \leq 2-w(P) ~~\mbox{(as $r\geq q$)}.
      \end{aligned}
    \end{equation*}

    Note that $N_S[P]$ is connected. If $w(P)\geq 2$, then we apply Claim \ref{claim::1} with $\sigma=w(P)-2$ to obtain $6|N_S[P]|\geq n+3+2(w(P)-2)=n-1+2w(P)$.
  \end{proof}

  Define $U_0=\{x\in T:d_G(x)=4,\ d_T(x)=3\}$ and $|U_0|=u_0$.

  \begin{claim}\label{claim::6}
    $u_0\geq 24$ and $e(U_0)=0$.  Consequently, $t\geq \frac{s}{2}+38$.
  \end{claim}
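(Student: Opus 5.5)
We prove the three assertions in the order $e(U_0)=0$, then $u_0\ge 24$, then the bound on $t$. All three come from Claim \ref{claim::5} together with $n\ge 32$ (Claim \ref{claim::4}) and two global counts available because $S$ is independent (Claim \ref{claim::2}):
\[
w(T)=2s+8 \quad\text{by \eqref{eq::1}},\qquad \sum_{x\in T}(d_G(x)-4)=s-8 \quad\text{as in \eqref{eq::13}}.
\]
Note that every $x\in T$ has $d_G(x)\ge 4$. For $x\in T$ we also have $d_G(x)-4=d_S(x)+d_T(x)-4=d_S(x)-w(x)$.

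\textbf{Step 1: $e(U_0)=0$.} Suppose $x,y\in U_0$ and $xy\in E(G)$, and put $P=\{x,y\}$. Then $G[P]$ is connected and $w(P)=1+1=2$. Each of $x,y$ has $d_S=4-3=1$, so $|N_S[P]|\le 4$. Claim \ref{claim::5} gives $24\ge 6|N_S[P]|\ge n-1+4=n+3$, so $n\le 21$. This contradicts Claim \ref{claim::4}.

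\textbf{Step 2: $u_0\ge 24$.} Partition $T$ into four classes:
\begin{itemize}
\item $L=\{x\in T: d_T(x)\le 2\}$, where $w(x)\in\{2,3,4\}$;
\item $U_0$, where $w(x)=1$ and $d_G(x)-4=0$;
\item $U_1=\{x\in T: d_T(x)=3,\ d_G(x)\ge 5\}$, where $w(x)=1$ and $d_G(x)-4\ge 1$;
\item the vertices with $d_T(x)\ge 4$, where $w(x)\le 0$ and $d_G(x)-4\ge 0$.
\end{itemize}
The key local inequality is $w(x)\le 2(d_G(x)-4)$ for every $x\in L$. To prove it, apply Claim \ref{claim::5} to $P=\{x\}$. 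This gives $6(1+d_S(x))\ge n-1+2w(x)\ge 31+2w(x)$. Since $d_S(x)$ is an integer, we get $d_S(x)\ge 5$ when $w(x)=2$ and $d_S(x)\ge 6$ when $w(x)\in\{3,4\}$. Hence $d_G(x)-4=d_S(x)-w(x)$ is at least $3,3,2$ for $w(x)=2,3,4$ respectively. This is the one step needing a small case check; it is where $n\ge 32$ is really used.

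Now sum over $T$. First,
\[
2s+8=w(T)\le \sum_{x\in L}w(x)+u_0+|U_1|.
\]
Second,
\[
s-8\ge \sum_{x\in L}(d_G(x)-4)+|U_1|\ge \tfrac12\sum_{x\in L}w(x)+|U_1|.
\]
Eliminating $\sum_{L}w(x)$ between these gives $2s+8\le 2(s-8-|U_1|)+u_0+|U_1|$. Therefore $u_0\ge 24+|U_1|\ge 24$.

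\textbf{Step 3: $t\ge \frac{s}{2}+38$.} By Step 1, $U_0$ is independent in $G[T]$. So each vertex of $U_0$ sends all $3$ of its $T$-edges into $T\backslash U_0$, which gives
\[
\sum_{x\in T\backslash U_0}d_T(x)\ge 3u_0.
\]
On the other hand, $\sum_{x\in T}d_T(x)=2e(T)=4t-2s-8$, so
\[
\sum_{x\in T\backslash U_0}d_T(x)=4t-2s-8-3u_0.
\]
Combining the two gives $4t\ge 2s+8+6u_0\ge 2s+152$ using Step 2. Hence $t\ge \frac{s}{2}+38$.
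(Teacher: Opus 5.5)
Your proof is correct and follows the paper's argument. It uses the same Claim~\ref{claim::5} contradiction for $e(U_0)=0$, the same double count of $w(T)=2s+8$ against $\sum_{x\in T}(d_G(x)-4)=s-8$ via $w(x)\le 2(d_G(x)-4)$ off $U_0$, and the same edge count between $U_0$ and $T\backslash U_0$. The one difference is how you get the local bound on $L$: you apply Claim~\ref{claim::5} to $P=\{x\}$, whereas the paper uses the trimmed star of Claim~\ref{claim::3}, which gives $d_G(x)\ge 7$; your bound is weaker (only $d_G(x)\ge 6$ when $d_T(x)=0$) but, as your case check shows, still sufficient.
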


  \begin{proof}
    Suppose that $x,y\in U_0$ and $xy\in E(T)$, and take $P=\{x,y\}$. Then $w(P)=2$ and $d_S(x)=d_S(y)=1$, so $|N_S[P]|\leq 4$. Claim \ref{claim::5} gives $n+3\leq 6|N_S[P]|\leq 24$, contrary to Claim \ref{claim::4}. Thus $e(U_0)=0$.

    For $x\in T\backslash U_0$. If $d_T(x)\leq 2$, Claim \ref{claim::3} gives a connected good set of size $d_G(x)-1$. By Claims \ref{claim::1} and \ref{claim::4}, we have $6(d_G(x)-1)\geq n+3+2\sigma\geq35(\sigma\geq 0)$, and hence $d_G(x)\geq 7$. Next, we show that for $x\in T\backslash U_0$, we have $w(x)\leq 2d_G(x)-8$. If $d_T(x)\leq2$, then $w(x)\leq 4\leq 2d_G(x)-8$; if $d_T(x)=3$ and $d_G(x)>4$, then $w(x)=1\leq2d_G(x)-8$; and if $d_T(x)\geq4$, then $w(x)\leq0\leq2d_G(x)-8$. Since $e(G)=2n-4$ and $n=s+t$, we have $\sum_{x\in T}d_G(x)=4n-8-3s=4t+s-8$. Therefore \eqref{eq::1} yields $$2s+8=\sum_{x\in T}w(x)= \sum_{x\in U_0}w(x)+\sum_{x\in T\backslash U_0}w(x)\leq u_0+\sum_{x\in T}(2d_G(x)-8)\leq u_0+2s-16,$$ so $u_0\geq 24$.

    Let $R=T\backslash U_0$. Each vertex $x$ in \(U_0\) satisfies $d_{T}(x)=3$. Since \(U_0\) is an independent set, all such edges connect to $R$. Thus $e_{G[T]}(U_0,R)=3u_0$. On the other hand,  by Claim \ref{claim::2} and $n=s+t$, we have $\sum_{x\in R}d_{T}(x)=2e(T)-3u_0=2(e(G)-e(S,T))-3u_0=2(2n-4-3s)-3u_0=2(2t-s-4)-3u_0$. The left-hand side is at least \(3u_0\), the number of edges from $R$ to \(U_0\). Hence $4t-2s-8\ge6u_0\ge144$, which yields $t\ge \frac{s}{2}+38$.
  \end{proof}

  Fix a decomposition $P_1,P_2,\cdots,P_k$ provided by Lemma \ref{lem::3}, and let $k$ be the number of connected sets contained in this decomposition.

  \begin{claim}\label{claim::7}
    Every counterexample $G$ satisfies $(k-6)t\le(14-k)s+k-16$.
  \end{claim}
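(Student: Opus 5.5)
The plan is to sum the inequality of Claim~\ref{claim::5} over the connected sets $P_1,\dots,P_k$ of the fixed decomposition from Lemma~\ref{lem::3}. Since $n=s+t$, the target inequality $(k-6)t\le(14-k)s+k-16$ rearranges to
\[
k(n-1)\le 6t+14s-16,
\]
and this is the form I will aim for.

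\textbf{Step 1: sum Claim~\ref{claim::5} over the $P_i$.} Each $P_i$ is connected and satisfies $w(P_i)\ge 2$. Claim~\ref{claim::5} therefore gives $6|N_S[P_i]|\ge n-1+2w(P_i)$ for every $i$. Summing over $i$ yields
\[
6\sum_{i=1}^k |N_S[P_i]|\ \ge\ k(n-1)+2\sum_{i=1}^k w(P_i).
\]
It remains to bound the left-hand side from above and $\sum_i w(P_i)$ from below, in a way that makes the error terms cancel.

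\textbf{Step 2: upper bound for the left side.} Write $|N_S[P_i]|=|P_i|+|N_S(P_i)|$. Let $h'$ be the number of nonempty residual sets $Q_H$ in the decomposition. The $P_i$ and the nonempty residual sets partition $T$, and each nonempty residual set has at least one vertex, so $\sum_i|P_i|\le t-h'$. For the $S$-part, each $z\in S$ has degree $3$, and by Claim~\ref{claim::2} all three of its neighbours lie in $T$. Since the $P_i$ are disjoint, $z$ belongs to at most three of the sets $N_S(P_i)$, so $\sum_i|N_S(P_i)|\le 3s$. Together these give
\[
\sum_i|N_S[P_i]|\le t-h'+3s .
\]

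\textbf{Step 3: lower bound for $\sum_i w(P_i)$.} By Claim~\ref{claim::2}, $S$ is independent, so \eqref{eq::1} gives $w(T)=2s+8$. Each nonempty residual set satisfies $w(Q_H)\le 1$ by Lemma~\ref{lem::3}, hence
\[
\sum_i w(P_i)=w(T)-\sum_H w(Q_H)\ge 2s+8-h'.
\]

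\textbf{Step 4: combine.} Substituting Steps 2 and 3 into Step 1 gives
\[
6(t-h'+3s)\ge k(n-1)+2(2s+8-h'),
\]
that is, $k(n-1)\le 6t+14s-16-4h'\le 6t+14s-16$. Rearranging with $n=s+t$ recovers the claimed inequality.

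The main obstacle is Step 2 combined with Step 3. The crude bound $\sum_i|P_i|\le t$ together with the bound $k\ge\lceil (2s+8-h)/3\rceil$ from Lemma~\ref{lem::3} would leave an unwanted $+2h$ term. The key point is to charge each residual set twice: once through its weight, which is at most $1$, and once through its vertex count, which is at least $1$. Handled this way the residual terms cancel with room to spare, and the number $h$ of components of $G[T]$ never has to be controlled.
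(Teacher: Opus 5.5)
Your proof is correct and follows the paper's route. You sum Claim~\ref{claim::5} over the $P_i$, bound $\sum_i |N_S[P_i]|$ by the vertices of $T$ outside the residual sets plus $3s$, and absorb the residual weight using $w(Q_H)\le 1$. The only cosmetic difference is bookkeeping: you count the nonempty residual sets $h'$, whereas the paper counts residual vertices $r$ and uses $w(R)\le r$, so your leftover term is $-4h'$ where the paper's is $2w(R)-6r\le -4r$.
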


  \begin{proof}
    We fix the decomposition above. Let $R$ be the union of all final residual sets, and let $|R|=r$. Every nonempty residual set $Q$ satisfies $w(Q) \le 1$, and the number of nonempty residual sets is at most their total number $r$ of vertices. Hence
    \begin{equation}\label{eq::20}
      w(R)\le r.
    \end{equation}
    The connected sets contain exactly $t-r$ vertices, and
    \begin{equation}\label{eq::21}
      \sum_{i=1}^k w(P_i)=w(T)-w(R).
    \end{equation}

    By Claim \ref{claim::5} and Lemma \ref{lem::3}, $N_S[P_i]$ is a connected good set satisfying $\partial_G(N_S[P_i])\le2|N_S[P_i]|-(w(P_i)-2)$, and
    \begin{equation}\label{eq::22}
      6|N_S[P_i]|\geq n-1+2w(P_i).
    \end{equation}
    The connected sets $P_i$ are pairwise disjoint. Moreover, each vertex in $S$ has only three neighbors, so there are at most three distinct indices $i$ such that it belongs to $N_S[P_i]$. Consequently,
    \begin{equation}\label{eq::23}
      \sum_{i=1}^k|N_S[P_i]|\leq t-r+3s.
    \end{equation}
    Summing \eqref{eq::22} and using \eqref{eq::21} and~\eqref{eq::23}, we obtain $6(t-r+3s) \geq k(n-1)+2(w(T)-w(R))$.
    Rearranging and applying \eqref{eq::20},
    \begin{equation*}
      \begin{aligned}
        k(n-1)+2w(T) & \leq 6(t+3s)+2w(R)-6r                    \\
                     & \leq 6(t+3s) ~~\mbox{(as $w(R)\leq r$)}.
      \end{aligned}
    \end{equation*}

    Substituting $n=s+t$ and \eqref{eq::1} and simplifying gives $(k-6)t\le(14-k)s+k-16$.
  \end{proof}

  Let the connected components of $T$ be $T_1,\dots,T_h$, and let $t_i=|T_i|$. For a vertex $x\in S$, define $a_i(x)=|N_{T_i}(x)|$, and let $\rho_i=\sum_{x\in S}a_i(x)$, $q_i=|\{x\in S:a_i(x)>0\}|$.

  \begin{claim}\label{claim::8}
    Every counterexample  $G$ satisfies $h(h-1)\leq 2s$.
  \end{claim}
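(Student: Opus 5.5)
The plan is to show that any two distinct components $T_i,T_j$ of $G[T]$ have at least three common neighbours in $S$. A double count then finishes the claim. Each $x\in S$ has $d_G(x)=3$, so it is a common neighbour of at most $\binom{3}{2}=3$ pairs of components. This gives
\[
3\binom{h}{2}\le\sum_{i<j}|N_S(T_i)\cap N_S(T_j)|\le 3s,
\]
that is, $h(h-1)\le 2s$.

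To prove the common-neighbour bound, fix $i\neq j$ and let $Z=N_S(T_i)\cap N_S(T_j)$. Suppose for contradiction that $|Z|\le 2$. Put
\[
X=T_i\cup\bigl(N_S(T_i)\backslash Z\bigr),\qquad Y=T_j\cup\bigl(N_S(T_j)\backslash Z\bigr).
\]
Both sets are nonempty and disjoint. There are no edges between $X$ and $Y$, for three reasons:
\begin{itemize}
\item $T_i$ and $T_j$ are different components of $G[T]$, so no edge joins them;
\item $S$ is independent by Claim \ref{claim::2};
\item a vertex of $N_S(T_i)\backslash Z$ has no neighbour in $T_j$, and symmetrically for $N_S(T_j)\backslash Z$.
\end{itemize}
So it suffices to show that $X$ and $Y$ are good sets; then $(X,Y)$ is a good pair, contradicting that $G$ is a counterexample.

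To check that $X$ is good, let $q'=|N_S(T_i)\backslash Z|$, let $\rho'$ be the number of edges from $T_i$ to $N_S(T_i)\backslash Z$, and let $z_i=e_G(T_i,Z)$. Since $T_i$ is a component of $G[T]$, every edge leaving $T_i$ goes into $S$. The edges leaving $X$ are therefore the edges from $T_i$ to $Z$, together with the edges from $N_S(T_i)\backslash Z$ to vertices other than $T_i$. Each vertex of $S$ has degree $3$, so
\[
\partial_G(X)=z_i+3q'-\rho'.
\]
The condition $\partial_G(X)\le 2|X|=2(t_i+q')$ becomes $z_i+q'\le 2t_i+\rho'$. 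Since $\rho'\ge q'$, it suffices to show $z_i\le 2t_i$. We consider two cases.
\begin{itemize}
\item If $t_i=1$, each vertex of $Z$ sends exactly one edge to $T_i$. Hence $z_i=|Z|\le 2=2t_i$.
\item If $t_i\ge 2$, each $z\in Z$ has at least one neighbour in $T_j$ and degree $3$, so it sends at most $2$ edges to $T_i$. Hence $z_i\le 2|Z|\le 4\le 2t_i$.
\end{itemize}
The same argument applies to $Y$.

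The main obstacle is finding the right threshold. The naive choice is to compare the full closed neighbourhoods $N_S[T_i]$ and $N_S[T_j]$. This only forces one common neighbour per pair, and hence only $h(h-1)\le 6s$. Removing the common neighbours $Z$ and then bounding $z_i$ is what pushes the bound to three common neighbours. It needs the separate treatment of $t_i=1$, where the crude bound $z_i\le 2|Z|=4$ would fail.
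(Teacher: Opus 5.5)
Your proof is correct. It is built from the same ingredients as the paper's proof. Your sets $X$ and $Y$ are exactly the paper's $X_i^{(j)}=T_i\cup S_i^{(j)}$ and $X_j^{(i)}$, and your final count uses the same fact that each $x\in S$ has only three neighbours.

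The difference is in what you extract for each pair of components. The paper computes $\Psi_i(j)=\partial_G(X_i^{(j)})-2|X_i^{(j)}|$ exactly, but then lower-bounds it using $\rho_i\ge q_i$ over all of $N_S(T_i)$. This only gives, for one orientation of each pair, that the weighted sum $\sum_{x\in\overline{S}_i^{(j)}}(2a_i(x)-1)$ is at least $3$. So the paper has to orient the edges of $K_h$. It then checks case by case (neighbour splits $3$, $2+1$, $1+1+1$) that each $x\in S$ contributes at most $3$ to the weighted total.

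You apply $\rho'\ge q'$ only to the private neighbours $N_S(T_i)\backslash Z$ and keep $z_i=e_G(T_i,Z)$ intact. This gives the sharper bound $\Psi_i(j)\le z_i-2t_i$. Combined with your split into $t_i=1$ and $t_i\ge 2$, it yields a symmetric, unweighted fact: $|N_S(T_i)\cap N_S(T_j)|\ge 3$ for every pair $i\neq j$. After that, the double count is immediate.

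Your route is shorter, avoids the orientation entirely, and gives a cleaner structural statement. The paper's weighted per-pair inequality is strictly weaker than yours, since $|Z|\ge 3$ forces the weighted sum to be at least $3$ in both orientations. It still suffices for the same bound $3\binom{h}{2}\le 3s$.
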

  \begin{proof}
    For any two distinct components $T_i$ and $T_j$ with $i\neq j$, let $S_i^{(j)}=\{x\in S:a_i(x)>0,\ a_j(x)=0\}$, $X_i^{(j)}=T_i\cup S_i^{(j)}$
    and define $X_j^{(i)}$ similarly.  By Claim \ref{claim::2}, $X_i^{(j)}$ and $X_j^{(i)}$ are nonempty and disjoint. Note that $e(X_i^{(j)},X_j^{(i)})=0$

    Define $\Psi_i(j) :=\partial_G(X_i^{(j)})-2|X_i^{(j)}|$.  Note that $\partial_G(T_i)=\rho_i$.  Starting from $T_i$, adding a vertex $x\in S_i^{(j)}$ changes the boundary minus twice the cardinality by $1-2a_i(x)$. Hence
    $$
      \Psi_i(j)=\rho_i-2t_i+\sum_{x\in S_i^{(j)}}(1-2a_i(x)).
    $$
    Let $\overline{S}_{i}^{(j)}=\{x\in S:a_i(x)>0,\ a_j(x)>0\}$. Note that $\rho_i=\sum_{x\in S_i^{(j)}}a_i(x)+\sum_{x\in \overline{S}_{i}^{(j)}}a_i(x)$ and $q_i=|S_i^{(j)}|+|\overline{S}_{i}^{(j)}|$, then
    \begin{equation*}
      \begin{aligned}
        \sum_{x\in S_i^{(j)}}(1-2a_i(x)) & = |S_i^{(j)}|-2\sum_{x\in S_i^{(j)}}a_i(x)                                      \\
                                         & =|S_i^{(j)}|-2(\rho_i-\sum_{x\in \overline{S}_{i}^{(j)}}a_i(x))                 \\
                                         & =q_i-|\overline{S}_{i}^{(j)}|-2\rho_i+2\sum_{x\in \overline{S}_{i}^{(j)}}a_i(x) \\
                                         & =q_i-2\rho_i+\sum_{x\in \overline{S}_{i}^{(j)}}(2a_i(x)-1).
      \end{aligned}
    \end{equation*}
    Thus,
    \begin{equation}\label{eq::30}
      \begin{aligned}
        \Psi_i(j) & =\rho_i-2t_i+\sum_{x\in S_i^{(j)}}(1-2a_i(x))                          \\
                  & =\rho_i-2t_i+q_i-2\rho_i+\sum_{x\in \overline{S}_{i}^{(j)}}(2a_i(x)-1) \\
                  & =-2t_i+q_i-\rho_i
        +\sum_{x\in \overline{S}_{i}^{(j)}}
        (2a_i(x)-1).
      \end{aligned}
    \end{equation}
    If both $\Psi_i(j)\le0$ and $\Psi_j(i)\le0$, then $X_i^{(j)}$ and $X_j^{(i)}$ form a good pair, a contradiction. Therefore, since $\Psi_i(j)$ is an integer, for distinct indices $i,j$, at least one of $\Psi_i(j)$ and $\Psi_j(i)$ is positive. We now construct a directed complete graph $K_h$ such that each vertex of $K_h$ corresponds to a connected component of $T$. We orient the edge $ij$ toward vertex $i$ satisfying $\Psi_i(j)>0$. If both vertices have positive value, either orientation may be selected. By \eqref{eq::30}, $\rho_i\geq q_i$ and $t_i\geq 1$,  every edge $ij$ oriented toward $i$ satisfies
    \begin{equation}\label{eq::31}
      \sum_{x\in \overline{S}_{i}^{(j)}}(2a_i(x)-1) \ge 2t_i-q_i+\rho_i+1 \ge 3.
    \end{equation}

    We now show that each $x\in S$ contributes at most $3$ to the sum of all left-hand sides in~\eqref{eq::31}. Fix $x\in S$ and consider how its three neighbors are distributed among the components of $G[T]$.
    \begin{itemize}
      \item If all three neighbors lie in one component, then $x$ contributes to no pair of components.
      \item If the distribution is $2+1$ across two components, there is only one relevant component pair. If its edge is oriented toward the component of multiplicity two, the contribution is $2\cdot2-1=3$; if it is oriented toward the other component, the contribution is $1$. Thus the total contribution is at most $3$.
      \item If the distribution is $1+1+1$ across three components, there are three component pairs. For each pair, whichever orientation is chosen, the contribution is $1$. Thus the total contribution is $3$.
    \end{itemize}
    Summing over all elements in $S$, the total sum of these left-hand sides is at most $3s$. By inequality \eqref{eq::31}, each edge of $K_h$ contributes at least $3$. Hence $3\binom h2\le3s$, which is equivalent to $h(h-1)\le2s$.
  \end{proof}

  We next show that $k\geq 7$. Since $\sum_{v\in V(G)}d_G(v)=2e(G)=4n-8$, we have $s\geq 8$. For $s=8,9,10$, Claim \ref{claim::8} gives $h\leq 4,4,5$, respectively, and  Lemma \ref{lem::3} then gives $k\geq 7,8,8$, respectively. If $s\geq 11$, then Claim \ref{claim::8} implies $h\leq s$. Indeed, $h\geq s+1$ would give $h(h-1)\geq s(s+1)>2s$, a contradiction. Therefore $2s+8-h\geq s+8\geq 19$ and  Lemma \ref{lem::3} again gives $k\geq 7$. Thus $k-6>0$ in every case. By Claims \ref{claim::6} and \ref{claim::7}, every counterexample $G$ must satisfy
  \begin{equation}\label{eq::40}
    \Delta(s,k):= \frac{3k-34}{2}s+37k-212\leq 0.
  \end{equation}
  Since the coefficient of $k$ in $\Delta(s,k)$ is $\frac{3s}{2}+37>0$, the function $\Delta(s,k)$ is strictly increasing in $k$.

  We distinguish three cases according to the value of $s$, where $s\geq 8$. First consider $s\geq 18$. From Claim \ref{claim::8} we obtain $h\leq \frac{1+\sqrt{1+8s}}{2} \leq \sqrt{2s}+1$. Moreover, $(s-12)^2-2s=(s-8)(s-18)\geq 0$, so $\sqrt{2s}\leq s-12$ and hence $h\leq s-11$. Therefore $2s+8-h\geq s+19\geq 37$ and Lemma \ref{lem::3} gives $k\geq 13$. Since $\Delta(s,k)$ is strictly increasing in $k$, we have $\Delta(s,k)\geq\Delta(s,13)=\frac{5}{2}s+269>0$, contradicting \eqref{eq::40}.

  Suppose next that $9\leq s\leq 17$. Applying Claim \ref{claim::8}, inequalities Lemma \ref{lem::3}  and \eqref{eq::40} yields the following result.
  \[
    \begin{array}{c|c|c|c}
      \toprule
      s  & \text{upper bound for }h           & \text{lower bound for }k
         & \text{lower bound for }\Delta(s,k)                                    \\
      \midrule
      9  & 4                                  & 8                        & 39    \\
      10 & 5                                  & 8                        & 34    \\
      11 & 5                                  & 9                        & 82.5  \\
      12 & 5                                  & 9                        & 79    \\
      13 & 5                                  & 10                       & 132   \\
      14 & 5                                  & 11                       & 188   \\
      15 & 6                                  & 11                       & 187.5 \\
      16 & 6                                  & 12                       & 248   \\
      17 & 6                                  & 12                       & 249   \\
      \bottomrule
    \end{array}
  \]
  Every value is positive, again contradicting \eqref{eq::40}.

  It remains only to consider $s=8$. $\sum_{v\in V(G)}d_G(v)=2e(G)=4n-8$ gives $d_G(x)=4$ for every $x\in T$. By Claims \ref{claim::3} and \ref{claim::4}, no vertex in $T$ can satisfy $d_T(x)\leq 2$, since that would produce a connected good set of size three and Claim \ref{claim::1} would imply $n\leq 15$. On the other hand, since every vertex of $T$ has degree $4$ in $G$, its degree in $T$ satisfies $d_T(x)=3$ or $d_T(x)=4$. By \eqref{eq::1}, we have $w(T)=2s+8=24$, exactly twenty-four vertices in $T$ have $d_T(x)=3$. Thus $u_0=24$.
  By Claim \ref{claim::6}, these vertices form an independent set in $T$. Let $R=T\backslash U_0$. Every vertex $v\in R$ has $d_T(v)=4$ and $d_S(v)=0$, while $e_{G[T]}(U_0,R)=3u_0=72$. Suppose that some $v\in R$ is adjacent to two distinct vertices $x,y\in U_0$. For $P=\{x,v,y\}$, we have $w(P)=1+0+1=2$. Here $d_S(x)=d_S(y)=1$ and $d_S(v)=0$, so $|N_S(P)|\leq d_S(x)+d_S(y)+d_S(v)=2$.
  Consequently, we have $|N_S[P]|=|P|+|N_S(P)|\leq 3+2=5$.
  Claim \ref{claim::5} gives $30\geq n-1+2w(P)$, contrary to Claim \ref{claim::4}. Thus every vertex of $R$ is incident with at most one edge from $U_0$, and consequently $|R|\geq 72$. It follows further that $t=u_0+|R|\geq96$. However, Claim \ref{claim::8} gives $h\le4$ when $s=8$, and Lemma \ref{lem::3} then gives $k\geq 7$. By Claim \ref{claim::7}, we have $(k-6)t\leq(14-k)8+k-16=96-7k\leq47$. Since $k-6\geq1$, this implies $t\leq 47$, contradicting $t\geq 96$.

  This completes the proof.
\end{proof}

\section*{Declaration for the use of AI}
The Lemma \ref{lem::3} and its proof were originally suggested by AI. The authors carefully rechecked and refined the suggested version, which led to the current version. The authors of this paper bear full responsibility for the correctness of the proofs and the rigor of the exposition.

\end{document}